\newtheorem{theorem}{Theorem}
\newtheorem{lemma}{Lemma}
\newtheorem{proposition}{Proposition}
\theoremstyle{definition}
\theoremstyle{remark}
\theoremstyle{note}
\numberwithin{equation}{section}
\newcommand{\im}{\hbox{Im}}
\DeclareMathOperator{\spn}{span} \DeclareMathOperator{\Cl}{Cl}
 \DeclareMathOperator{\End}{End}
\begin{document}
\title[Sub-semi-Riemannian geometry on $H$-type groups]
{Sub-semi-Riemannian geometry on $H$-type groups}

\author{Anna Korolko}

\address{Department of Mathematics,
University of Bergen, Johannes Brunsgate 12, Bergen 5008, Norway}

\email{anna.korolko@uib.no}

\thanks{The author is partially supported by the grant of the Norwegian Research Council \# 177355/V30, by the grant of the European Science Foundation Networking Programme HCAA, and by the NordForsk Research Network Programme \# 080151}

\subjclass[2000]{53C50,\ 53B30\, 53C17}

\keywords{$H$-type group, sub-semi-Riemannian geometry, geodesic, Clifford algebra}


\begin{abstract}

We consider $H$(eisenberg)-type groups whose law of left translation gives rise to a bracket generating distribution of 
step 2. In the contrast with 
sub-Riemannian studies we furnish the horizontal distribution with a nondegenerate indefinite metric of arbitrary index 
and investigate the problem concerning causal geodesics on underlying manifolds. The exact formulae for geodesics are obtained.
\end{abstract}

\maketitle

\section{Introduction}\label{sec:1}

Sub-semi-Riemannian geometry is a study of nonholonomic manifolds $M$ whose non-integrable tangent subbundles $\mathcal{D}$ of the 
tangent bundle $TM$ are equipped with a nondegenerate indefinite metric $g$ (sub-semi-Riemannian metric). 
If the metric $g$ on $\mathcal{D}$ is a positively 
definite metric, then the triple $(M,\mathcal{D},g)$ is known as sub-Riemannian manifold which has become an actively studied 
mathematical topic. If $g$ is a nondegenerate 
indefinite metric on $\mathcal{D}$ of index $p\in \mathbb{N}$, then the corresponding object can be called a sub-semi-Riemannian 
manifold; if $p=1$, the manifold is called sub-Lorentzian. If $\mathcal{D}$ coincides with the entire tangent bundle $TM$ and $g$ is nondegenerate 
metric of index greater than 1, then the 
corresponding triplet reduces to semi-Riemannian manifold. 
There are numerous applications of nonholonomic manifolds with positively definite metric in problems in physics of high energy, 
control theory, electromagnetism, mechanical and biological systems with nonholonomic constraints. Therefore, it is interesting to start 
studying  more generalized objects -- sub-semi-Riemannian manifolds.

Most of notions in sub-semi-Riemannian geometry originate in sub-Riemannian geometry \cite{Montgomery,Chang} and Lorentzian 
geometry \cite{Oneill}.

The intention of the article is to enlighten some aspects of sub-semi-Riemannian geometry. We start with a 
study of geometry, analysis and underlying algebraic structures of $H$(eisenberg)-type groups with 
nondegenerate metric. These 2-step nilpotent Lie groups are interesting and natural examples of nonholonomic manifolds 
in sub-semi-Riemannian sense and at the same time first large class of objects with similar behavior which can be described in 
sub-semi-Riemannian geometry. The notion of $H$-type groups with 
positively definite metric was first introduced 
by A.~Kaplan in his works \cite{Kap,Kap1,Kap2} with the help of theory of Clifford algebras. 
Such groups with a Riemannian metric provide nice 
examples of nonholonomic manifolds to study for a sub-Riemannian geometry \cite{CM,CM2}.

Some examples of $H$-type groups with semi-Riemannian metric on the distribution were already considered in \cite{Groch,KM,KM2,KM3} 
and this is the first attempt to obtain a generalizing result concerning such geometric objects of interest as geodesics. 
We solve the geodesic equation for sub-semi-Riemannian geodesics, which coincides  with the geodesic equation in 
sub-Riemannian case if the 
considered metric tensor is Riemannian. 
The essential role here plays the skew-symmetry of emerging objects with respect to the 
semi-Riemannian product on $V$, which is not positively definite anymore (see the section \ref{Geodesics}). 
The skew-symmetry arises from the use of Clifford algebras and their representations. 
A good deal of the article is devoted to the description of skew-symmetric transformations w.~r.~t. semi-Riemannian metric. 
The canonical view and, therefore, eigenvalues of corresponding matrices that are skew-symmetric w.~r.~t. semi-Riemannian metric $g$ 
greatly depends on the index of $g$.

The article is organized in the following way. In Section \ref{General} we introduce $H$-type groups endowed with semi-Riemannian 
metric. We also present necessary basic notions from space-time geometry. In Section \ref{Exp} we indroduce exponential map, with 
its help we calculate left-invariant vector fields and the connection. Section \ref{Geodesics} is 
devoted to calculations of geodesics and auxiliary algebraic backgrounds. And in Section \ref{Example} we provide the example of
 $H$-type group with 
semi-Riemannian metric which illustrates how the expounded theory could be applied for the problem of finding geodesics in a 
particular case. We conclude the survey by the list of open problems in Section 6.

\section{General structure of Heisenberg-type groups with nondegenerate metric}\label{General}

Let $U$ and $V$ be vector spaces of dimensions $m$ and $n$ respectively with positively definite 
product $\langle\cdot,\cdot\rangle_U$ 
on $U$ and nondegenerate product  $\langle\cdot,\cdot\rangle_V$ of signature $(p,q)$, $p+q=n$, on $V$.
Denote by $\eta$ the metric tensor of the product on $V$, which is a diagonal matrix with first $p$ negative unities and 
then $q$ positive unities, i.~e.  
\begin{equation}\label{eq:eta}
    \eta=
\left(\begin{array}{cc}
   -I_p&0\\
   0&I_q
\end{array}\right),
\end{equation}
where $I_p$ and $I_q$ are usual identity matrices of dimensions $(p\times p)$ and $(q\times q)$ respectively.
The corresponding metric $g\colon V\times V\to\mathbb{R}$ is called a nondegenerate metric of index $p$, $p\in\mathbb{N}$. 
Then as usually $g(v,w)=\langle v,w\rangle_V=w^t\eta v$, where $v,w\in V$ and $w^t$ denotes the vector transposed to $w$. 
Nondegeneracy is understood here in the sense that if $\langle v,w\rangle_V=0$ for any $w\in V$, then $v=0$.
If the index of the metric is 1, then it is called the Lorentzian metric. The metric with the index 0 is just a Riemannian metric. 
In the case of arbitrary index such a nondegenerate indefinite metric  received the name semi-Riemannian metric.

Let us denote by the symbol $\End(V)$ the set of linear maps from $V$ to $V$.
Consider a linear mapping $j\colon U\to \End(V)$, which satisfies the following properties:
\begin{equation}\label{eq:j}
\begin{split}
   &i) \quad j^2(u)=-|u|^2_U \,I_V,\\
   &ii) \quad \langle \eta j(u)v,w\rangle_V= -\langle v,\eta j(u)w\rangle_V,
\end{split}
\end{equation} where $u\in U$, $v,w\in V$ and $I_V$ is a $(n\times n)$ identity matrix in $V$, $|u|^2_U=\langle u,u\rangle_U$. 
Conditions \eqref{eq:j} are taken from \cite{Kap2} and adapted for the case of semi-Riemannian metric on $V$.
 For all $u\in U$ the linear map $j(u)$ 
provides a complex structure on $V$ 
compatible with its euclidean structure.
By abuse of notations we use the symbol $j$ to denote both the mapping and the corresponding $(n\times n)$-matrix.

{\bf Remark}. 
The property of $j$ being skew-symmetric remains the same as in the 
case of $H$-type groups with positively definite metrics on both spaces $U$ and $V$ (\cite{Kap2}).
Indeed, notice that 
\begin{gather*}
 \langle \eta j(u)v,w\rangle_V=w^t\eta\eta j(u)v=w^tj(u)v,\\
 \langle v,\eta j(u)w\rangle_V=w^t j(u)^t\eta^t\eta v=w^tj(u)^tv
\end{gather*} and from the condition $ii)$ it follows that
\begin{equation}\label{eq:j-skew}
   j(u)=-j(u)^t.
\end{equation}

Then we can define the bilinar skew-symmetric map $[\cdot,\cdot]\colon V\times V\to U$ by 
\begin{equation}\label{eq:bracket}
    \langle u,[v,w]\rangle_U =\langle\eta j(u)v,w\rangle_V.
\end{equation}
We can now define a Lie algebra $$\mathcal{G}=U\oplus V$$ with a Lie structure on it:
$$\qquad\qquad[u+v,r+w]=[v,w]\qquad (u,r\in U, \; v,w\in V).$$
Constructed in this way Lie algebra is graded and 2-step nilpotent with the center $U$. As in \cite{Kap2} we shall refer to 
these algebras as $H$-type algebras with nondegenerate metric $\langle\cdot,\cdot\rangle_V$.

Recall that if $U$ is a $m$-dimensional real space $\mathbb{R}^m$ with a basis consisting of $m$ orthogonal unit vectors $e_1,
\ldots, e_m$, then the Clifford algebra $\Cl(U,m)$ is the real associative algebra generated by the set $\{e_1,\ldots,e_m\}$, 
satisfying 
the relations (see \cite{Lounesto})
\begin{gather*}
   e_{\alpha}^2=-1,\quad \alpha=1,\ldots,m,\\
   e_{\alpha}e_{\beta}=-e_{\beta}e_{\alpha},\quad\mbox{for}\quad \alpha\neq \beta.
\end{gather*} 
The Clifford algebra $\Cl(U,m)$ is $2^m$-dimensional with the following basis:
\begin{gather*}
    \begin{array}{cc}
      1&\text{the scalar}\\
      e_1,\ldots, e_m &\text{vectors}\\
      e_1e_2,\ldots,e_{m-1}e_m&\text{bivectors}\\
      e_1e_2e_3,\ldots,e_{m-2}e_{m-1}e_m&$3-$\text{vectors}\\
      \vdots&\vdots\\
      e_1e_2\ldots e_m &\text{the } $m-$\text{volume element}.
    \end{array}
\end{gather*} Hence an arbitrary element of the basis may be written as $e_{\mathcal{A}}=e_{\alpha_1}\ldots e_{\alpha_i}$; 
here $\mathcal{A}=(\alpha_1,\ldots,\alpha_i)$ and $1\leqslant \alpha_1<\alpha_2<\ldots<\alpha_i\leqslant m$.
All elements of this type
share the basic property of the imaginary unit of being the square root of $-1$, i.~e. $(e_{\mathcal{A}})^2=-1$. It is known that real  
Clifford algebras $\Cl(U,m)$ are isomorphic to real matrix algebras with entries in $\mathbb{R}$, $\mathbb{C}$, $\mathbb{H}$ or in 
$\mathbb{R}\oplus\mathbb{R}$, $\mathbb{H}\oplus\mathbb{H}$, where by $\mathbb{R}$, $\mathbb{C}$ and $\mathbb{H}$ we denote 
division rings of real numbers, complex numbers and of quaternions respectively. 

The mapping $j\colon U\to \End(V)$ satisfying \eqref{eq:j} extends to a representation of Clifford algebra $\Cl(U,m)$ on $V$; in 
other words, the space $V$ becomes a Clifford module over $\Cl(U,m)$. The images of the generators $e_1,\ldots,e_m$ of a Clifford algebra 
$\Cl(U,m)$ under a representation $j$: $j_{\alpha}:=j(e_{\alpha})$, $\alpha=1,\ldots,m$, are called {\it(standard) generators} of the 
representation. It was noticed that without loss of generality one can represent standard generators as a set of anti-commuting 
skew-symmetric matrices with squares $-I_V$ (see \cite{Bilge}). Moreover, these matrices can be expressed as homogeneous tensor 
products of the standard Pauli ($2\times2$)-matrices, though the determination of such matrices is nontrivial.

Recall also that $V$ admits a structure of Clifford module over 
$\Cl(U,m)$ if and only if the following 
condition on dimensions $n$ and $m$ of vector spaces $V$ and $U$, respectively, holds: $0\leqslant m<\rho(n)$, where $\rho$ is the 
Hurwitz-Radon function defined by $n=k2^{4r+s}\mapsto\rho(n)=8r+2^s$, $k$ -- odd, $0\leqslant s\leqslant 3$ \cite{Kap2}. 
If $V$ is a normed division algebra, then $U=\im V$ is a space of imaginary elements of $V$ and $\dim U=m=n-1$.
If $m=0$ and $n$ is arbitrary, then $\mathcal{G}$ is the Euclidean $n$-dimensional space; if $m=1$ and $n$ is even, 
then $\mathcal{G}$ is the Heisenberg algebra; if $m=3$ and $n=4k$, then 
$\mathcal{G}$ is the Quaternion algebra; if $m=7$ and $n=8$, then we call the corresponding algebra an 
$\mathbb{H}\oplus\mathbb{H}$-algebra. These four algebras generate a special class among the $H$-type algebras, satisfying the $j^2$-condition: 
for all $u_1,u_2\in U$ with $\langle u_1,u_2\rangle_U=0$ and all nonzero $v\in V$ there exists a $u_3\in U$ so 
that 
\begin{equation*}
  j(u_1)j(u_2)v=j(u_3)v.
\end{equation*}

Exact formulae for geodesics on some of corresponding groups with semi-Riemannian metric on $V$ were found in \cite{KM,KM2,KM3}. 
In the present 
article we concentrate our attention on the general $H$-type groups with nondegenerate metric on $V$.

Let us consider the $H$-type group $G$ with the Lie algebra $\mathcal{G}=V\oplus U$ and nondegenerate product $\langle\cdot,\cdot
\rangle_V$ on $V$. The Lie algebra $\mathcal{G}$ is identified with the tangent space to $G$ at the identity $T_eG$. 
The scalar product $\langle\cdot,\cdot\rangle$ on the algebra $\mathcal{G}$ is a sum of the inner product $\langle\cdot,\cdot\rangle_U$ on $U$ 
and nondegenerate product $\langle \cdot,\cdot\rangle_V$ on $V$. 
Push-forward allows to define the 
scalar product on the whole tangent bundle $TG=\cup_{\sigma}T_{\sigma}G$, not only in $T_eG$:
$$\langle\cdot,\cdot\rangle(\sigma)=\langle dL_{\sigma^{-1}}\cdot,dL_{\sigma^{-1}}\cdot\rangle (e),$$
 where $L_{\sigma}$ denotes the left translation on $G$ 
by the element $\sigma\in G$. 
Since $V\subset T_eG$, the left translation of $V$ by element $\sigma$ is $dL_{\sigma}(V)\subset T_{\sigma}G$. The mapping 
$\mathcal{D}\colon \sigma \mapsto dL_{\sigma}(V)$ is called a {\it horizontal distribution} and any vector $v(\sigma)\in dL_{\sigma}
(V)$ is called a {\it horizontal vector} at point $\sigma$.  

By analogy with sub-Riemannian geometry the absolutely continuous curve $c(s)\colon [0,1]\to \mathcal{D}$ 
is called the {\it horizontal curve} if the 
tangent vector $\dot c(s)$ belongs to $\mathcal{D}$ at each $s\in[0,1]$. The Lie group $G$ was introduced in such a way, that 
the horizontal distribution $\mathcal{D}$ possesses a {\it 2-step bracket 
generating property}. It means that for any point $p\in G$ we have that any tangent vector can be presented as a linear combination 
of the vectors of the following types $v(p),\,\,[v,w](p)\in T_pM$, where all vectors $v,\,w$ are horizontal. 
A result of Chow \cite{Chow} says that any two points on $G$ can be connected by 
a piecewise smooth horizontal curve because of the bracket generating property of $\mathcal{D}$. 
Chow theorem guarantees that the connectivity of a manifold by curves tangent 
to a given distribution $\mathcal{D}$ does not depend neither on metric defined on it nor on tangent bundle.

In this article we study the triple $(G,\mathcal{D},g)$, which we call the {\it sub-semi-Riemannian manifold},  where 
the nondegenerate bundle-type metric $g$ 
of arbitrary index $p$ is given on distribution $\mathcal{D}$. We will call the corresponding group $G$ sub-semi Riemannian $H$-type group. 
Due to left translation we can work further 
with the distribution at the point $e$, i.~e. with   
$dL_e(V)=V$ instead of $\mathcal{D}$. 

The nondegeneracy of the metric $g$ brings the causal character of the vectors in $V$. 
The notations are carried from the Lorentzian geometry \cite{Beem}. 
Even though the metric $g$ has the arbitrary index $p$ and the time 
orientation can not be defined in general (unless $p=1$), we can anyway introduce the notions of timelikeness, spacelikeness and 
lightlikeness.
A horizontal vector $v\in V$ is called {\it timelike} if $\langle v,v\rangle_V<0$, {\it spacelike} if 
$\langle v,v\rangle_V>0$ or $v=0$, {\it lightlike} if $\langle v,v\rangle_V=0$ and $v\neq0$, 
{\it nonspacelike} if $\langle v,v\rangle_V\leqslant0$. A horizontal curve is called timelike if its tangent vector
is timelike at each point; similarly for spacelike, lightlike and nonspacelike.


\section{Exponential map, left-invariant vector fields and connection}\label{Exp}

We define the exponential map as a function $\exp: \mathcal{G}\to G$ in a usual way in the Lie groups theory. 
Then we can define smooth maps $\bf{u}$$\colon G\to U$ and $\bf{v}$$\colon G\to V$ such that
\begin{equation*}
   \sigma=\exp({\bf u}(\sigma)+{\bf v}(\sigma))\quad\mbox{for}\quad \sigma\in G.
\end{equation*}

Since $\mathcal{G}$ is a 2-step nilpotent Lie algebra, then the Baker-Campbell-Hausdorff formula yields
\begin{equation}\label{eq:BCHformula}
     \begin{split}
        &{\bf{v}}(\sigma_1\sigma_2)={\bf{v}}(\sigma_1)+{\bf{v}}(\sigma_2),\\        
        &{\bf{u}}(\sigma_1\sigma_2)={\bf{u}}(\sigma_1)+{\bf{u}}(\sigma_2)+\frac{1}{2}[{\bf{v}}(\sigma_1),{\bf{v}}(\sigma_2)].
     \end{split}
\end{equation}
Suppose that $\left\{\partial_{v_j}\right\}_{j=1}^n$ and 
$\left\{\partial_{u_{\alpha}}\right\}_{\alpha=1}^m$ are orthonormal bases for $V$ and $U$
 respectively such that $\langle \partial_{v_i}, 
\partial_{v_j}\rangle_V=\varepsilon_j\delta_{ij}$ and 
$\langle \partial_{u_{\alpha}}, 
\partial_{u_{\beta}}\rangle_U=\delta_{\alpha\beta}$, $\langle \partial_{v_i},\partial_{u_{\alpha}}\rangle=0$, $\delta_{ij}$ is a usual Kronecker 
symbol and $\varepsilon_j=-1$ for $1\leqslant j\leqslant p$ and $\varepsilon_j=1$ for $p+1\leqslant j\leqslant p+q=n$.
Denote by $A^{\alpha}_{ij}$ the analog of Clifford coefficients, i.~e. the coefficients in the decomposition 
$$\eta j\big(\partial_{u_{\alpha}}\big)\partial_{v_i}=\sum\limits_{j=1}^nA^{\alpha}_{ij}\partial_{v_j}\in V.$$ 
Denote by $B^{\beta}_{ij}$ the structure constants in 
$$\Big[\partial_{v_i},\partial_{v_j}\Big]=\sum\limits_{\beta=1}^mB^{\beta}_{ij}
\partial_{u_{\beta}}\in U.$$
They remain the same as in case of $H$-type groups with sub-Riemannian metric.
The structure constants $B^{\beta}_{ij}$ are connected to the analog of Clifford coefficients in a following way:
\begin{lemma}
   $$B^{\alpha}_{ij}=\varepsilon_jA^{\alpha}_{ij},$$
\end{lemma}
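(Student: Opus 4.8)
The plan is to read the identity off the defining relation \eqref{eq:bracket} for the bracket, evaluated on the chosen orthonormal bases. First I would substitute $u=\partial_{u_{\alpha}}$, $v=\partial_{v_i}$, $w=\partial_{v_j}$ into \eqref{eq:bracket}, obtaining
$$\langle \partial_{u_{\alpha}},[\partial_{v_i},\partial_{v_j}]\rangle_U=\langle \eta j(\partial_{u_{\alpha}})\partial_{v_i},\partial_{v_j}\rangle_V .$$
The idea is then to expand each side in the respective basis and compare scalar coefficients; the orthonormality conventions $\langle\partial_{u_{\alpha}},\partial_{u_{\beta}}\rangle_U=\delta_{\alpha\beta}$ and $\langle\partial_{v_i},\partial_{v_j}\rangle_V=\varepsilon_j\delta_{ij}$ are exactly what is needed to isolate $B^{\alpha}_{ij}$ on the left and $A^{\alpha}_{ij}$ on the right.

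Carrying this out: on the left-hand side, using $[\partial_{v_i},\partial_{v_j}]=\sum_{\beta}B^{\beta}_{ij}\partial_{u_{\beta}}$, one gets $\langle\partial_{u_{\alpha}},\sum_{\beta}B^{\beta}_{ij}\partial_{u_{\beta}}\rangle_U=\sum_{\beta}B^{\beta}_{ij}\delta_{\alpha\beta}=B^{\alpha}_{ij}$. On the right-hand side, using $\eta j(\partial_{u_{\alpha}})\partial_{v_i}=\sum_{k}A^{\alpha}_{ik}\partial_{v_k}$, one gets $\langle\sum_{k}A^{\alpha}_{ik}\partial_{v_k},\partial_{v_j}\rangle_V=\sum_{k}A^{\alpha}_{ik}\varepsilon_j\delta_{kj}=\varepsilon_j A^{\alpha}_{ij}$. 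Equating the two expressions yields $B^{\alpha}_{ij}=\varepsilon_j A^{\alpha}_{ij}$, which is the claim. (Since the structure constants $B^{\beta}_{ij}$ are determined by $\langle u,[v,w]\rangle_U=w^t j(u)v$, independent of $\eta$, they coincide with the sub-Riemannian ones, as asserted in the text preceding the lemma.)

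There is no real obstacle here: the argument is a direct unwinding of definitions, and the only point requiring a little care is the placement of the sign $\varepsilon_j$ — it enters precisely through the indefinite inner product $\langle\partial_{v_k},\partial_{v_j}\rangle_V=\varepsilon_j\delta_{kj}$ on the right-hand side, whereas the $U$-inner product is positive definite and contributes no sign. As a sanity check, in the Riemannian case all $\varepsilon_j=1$ and the relation degenerates to $B^{\alpha}_{ij}=A^{\alpha}_{ij}$, recovering the familiar identification of Clifford coefficients with structure constants; the general formula simply tracks how the signature of $g$ on $V$ permutes and sign-flips these coefficients.
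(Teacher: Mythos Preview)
Your proof is correct and is essentially identical to the paper's own argument: both evaluate the defining relation $\langle u,[v,w]\rangle_U=\langle\eta j(u)v,w\rangle_V$ on the basis vectors and expand each side using the orthonormality relations, yielding $B^{\alpha}_{ij}$ on one side and $\varepsilon_j A^{\alpha}_{ij}$ on the other. The only cosmetic difference is that the paper writes $\varepsilon_k\delta_{kj}$ where you write $\varepsilon_j\delta_{kj}$, which of course agree.
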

\begin{proof}
We calculate
 \begin{equation}\label{eq:B}
   \langle\eta j\big(\partial_{u_{\alpha}}\big)\partial_{v_i},
 \partial_{v_j}\rangle_V=\langle \partial_{u_{\alpha}},\big[
 \partial_{v_i},\partial_{v_j}\big]\rangle_U=
   \sum\limits_{\beta=1}^mB^{\beta}_{ij}\langle \partial_{u_{\alpha}}, \partial_{u_{\beta}}\rangle_U=
   \sum\limits_{\beta=1}^mB^{\beta}_{ij}\delta_{\alpha\beta}=B^{\alpha}_{ij},
\end{equation}
but from the other side
\begin{gather*}
     \langle\eta j\big(\partial_{u_{\alpha}}\big)\partial_{v_i},\partial_{v_j}\rangle_V=
    \langle\sum\limits_{k=1}^nA^{\alpha}_{ik}\partial_{v_k},\partial_{v_j}\rangle_V
    =\sum\limits_{k=1}^nA^{\alpha}_{ik}\langle \partial_{v_k},\partial_{v_j}\rangle_V
    =\sum\limits_{k=1}^nA^{\alpha}_{ik}\varepsilon_k\delta_{kj}=\varepsilon_jA^{\alpha}_{ij}.
\end{gather*}
\end{proof}

Let us denote the global coordinates on the group $G$ by $\{v_1,\ldots,v_n,u_1,\ldots, u_m\}$: 
$${\bf{v}}(\sigma)=\sum\limits_{i=1}^nv_i\partial_{v_i}(e),\qquad 
{\bf{u}}(\sigma)=\sum\limits_{\alpha=1}^mu_{\alpha}\partial_{u_{\alpha}}(e).$$
Let us compute the Lie bracket
\begin{gather*}
    [{\bf{v}}(\sigma_1),{\bf{v}}(\sigma_2)]=\Big[\sum\limits_{i=1}^nv_i^1\partial_{v_i}(e),
    \sum\limits_{j=1}^nv_j^2\partial_{v_j}(e)\Big]\notag\\
    = v_1^2\Big(\sum\limits_{j=1}^nv_j^1B^1_{j1}\partial_{u_1}
    +\sum\limits_{j=1}^nv_j^1B^2_{j1}\partial_{u_2}+\ldots
    +\sum\limits_{j=1}^nv^1_jB^m_{j1}\partial_{u_m}\Big)\notag\\
     +v_2^2\Big(\sum\limits_{j=1}^nv_j^1B^1_{j2}\partial_{u_1}
    +\sum\limits_{j=1}^nv_j^1B^2_{j2}\partial_{u_2}+\ldots
    +\sum\limits_{j=1}^nv^1_jB^m_{j2}\partial_{u_m}\Big)\notag\\
    \vdots\notag\\
    +v_n^2\Big(\sum\limits_{j=1}^nv_j^1B^1_{jn}\partial_{u_1}
   +\sum\limits_{j=1}^nv_j^1B^2_{jn}\partial_{u_2}+\ldots
   +\sum\limits_{j=1}^nv^1_jB^m_{jn}\partial_{u_m}\Big).
\end{gather*}
Therefore, the left-invariant vector fields obtained by the rule $V_i(\sigma)=dL_{\sigma}
 \partial_{v_i}(e)$ are as follows
\begin{equation*}
      V_i=\partial_{v_i}
    +\frac{1}{2}\sum\limits_{\alpha=1}^m\Big(\sum\limits_{j=1}^nv_j
    B^{\alpha}_{ji}\Big)\partial_{u_{\alpha}}, \quad i=1,\ldots,n
\end{equation*}
and $$U_{\alpha}=\partial_{u_{\alpha}}, \quad \alpha=1,\ldots m.$$
The vector fields remain the same as in sub-Riemannian case when the metric on $V$ is positively definite, 
since the matrix $dL_{\sigma}$ is obtained from the group structure, which we do not change (compare with \cite{Kap2}).
Matrices $B^{\alpha}=\big(B^{\alpha}_{ji}\big)$, 
$\alpha=1,\ldots,m$, $i,j=1,\ldots,n$,
are in one-to-one correspondance defined by \eqref{eq:B} with matrices $j_{\alpha}=j(\partial_{u_{\alpha}})$ 
that are the standard generators of representation 
for Clifford algebra $\Cl(U,m)$.
Analogously to sub-Riemannian case 
we can write the horizontal gradient, i.~e. the vector $\mathcal{V}=
(V_1,\ldots,V_n)$, in the form
$$\mathcal{V}=\triangledown_v+\frac{1}{2}\Big(B^1v\partial_{u_1}+\ldots+B^mv\partial_{u_m}\Big),$$
with $v=(v_1,\ldots,v_n)$ and $\triangledown_v=\big(\partial_{v_1},\ldots,\partial_{v_n}\big)$. 

{\bf Remark.}
Notice that $\{V_i, U_{\alpha}\}$, $i=1,\ldots,n,\,\alpha=1,\ldots,m$ is an orthonormal basis with respect to the left-invariant 
metric $\langle\cdot,\cdot\rangle$.
\begin{proof}
By definition of metric $\langle\cdot,\cdot\rangle$ on algebra $\mathcal{G}$ we have:
 \begin{gather*}
     \langle V_i,V_j\rangle=\langle V_i,V_j\rangle_V=\langle \partial_{v_i},\partial_{v_j}\rangle_V=\varepsilon_j\delta_{ij},\\
     \langle U_{\alpha},U_{\beta}\rangle=\langle U_{\alpha},U_{\beta}\rangle_U=
     \langle \partial_{u_{\alpha}},\partial_{u_{\beta}}\rangle_U=\delta_{\alpha\beta},\\
     \langle V_i,U_{\alpha}\rangle=0.
\end{gather*}
\end{proof}

The connection on a group $G$ is defined by Koszul formula (\cite{Oneill})
\begin{gather*}
      \langle \bigtriangledown_XY,Z\rangle=\frac{1}{2}\Big(
    X\langle Y,Z\rangle+Y\langle Z,X\rangle-Z\langle X,Y\rangle
-\langle X,[Y,Z]\rangle+\langle Y,[Z,X]\rangle+\langle Z,[X,Y]\rangle\Big).
\end{gather*}
For the orthonormal basis of $\mathcal{G}$ this formula receives the form
\begin{equation*}
      \langle \bigtriangledown_XY,Z\rangle=\frac{1}{2}\Big(-\langle X,[Y,Z]\rangle+\langle Y,[Z,X]\rangle+\langle Z,[X,Y]\rangle\Big),
\end{equation*}
where $X,Y,Z$ are arbitrary basis elements in the corresponding Lie algebra $\mathcal{G}=U\oplus V$. 
In the case of $H$-type group we calculate that
\begin{equation*}\begin{split}
    &\langle \bigtriangledown_{V_i}V_j,U_{\alpha}
    \rangle=\frac{1}{2}\langle U_{\alpha},[V_i,V_j]
   \rangle_U=\frac{1}{2}\langle \eta j\big(U_{\alpha}\big)V_i,
   V_j\rangle_V,\\
    &\langle \bigtriangledown_{U_{\alpha}}V_i,
   V_j\rangle=-\frac{1}{2}\langle U_{\alpha},
   \big[V_i,V_j\big]\rangle_U
    =-\frac{1}{2}\langle \eta j\big(U_{\alpha}\big)V_i,
   V_j\rangle_V,\\
    &\langle \bigtriangledown_{V_i}U_{\alpha},
   V_j\rangle=\frac{1}{2}\langle U_{\alpha},
   \big[V_j,V_i\big]\rangle_U
    =-\frac{1}{2}\langle U_{\alpha},\big[
   V_i,V_j]\rangle_U
   =-\frac{1}{2}\langle \eta j\big(U_{\alpha}\big)V_i,V_j\rangle_V,
\end{split}
\end{equation*} for any $V_i,V_j\in V$, $i\neq j$, and $U_{\alpha}\in U$. It is easy to verify that all the other combinations result as $0$.
From here we obtain that the connection is given by
\begin{equation}\label{eq:connection}\begin{split}
      &\bigtriangledown_{V_i}V_j=\frac{1}{2}
     \big[V_i,V_j\big],\\
      &\bigtriangledown_{U_{\alpha}}V_i
     =\bigtriangledown_{V_i}U_{\alpha}=-\frac{1}{2}\eta j\big(
    U_{\alpha}\big)V_i,\\
      &\bigtriangledown_{U_{\alpha}}U_{\beta}=0.\end{split}
\end{equation}

\section{Geodesics}\label{Geodesics}

In this section we compute geodesics passing through the identity with a given initial vector. By geodesic we mean 
a curve $\gamma\colon [0,1]\to G$ which satisfies the geodesic equation $\bigtriangledown_{\dot{\gamma}}{\dot{\gamma}}=0$. 
We define functions $t\mapsto v(t)$ and $t\mapsto u(t)$, such that $\gamma(t)=(v(t),u(t))$ and $\dot v(t)$ and $\dot u(t)$ are 
the projections of $\dot\gamma(t)$ onto $V$ and $U$ respectively.

Then in terms of global coordinates on $G$ one gets 
$\gamma(t)=\big(v_1(t),\ldots,v_n(t),u_1(t),\ldots,u_m(t)\big)$ and 
the derivative with respect to $t$ is
\begin{equation}\label{eq:derivative}
\begin{split}
      &\dot{\gamma}=\sum\limits_{i=1}^n\dot v_iV_i+\sum\limits_{\alpha=1}^m\dot u_{\alpha}U_{\alpha}
     =\sum\limits_{i=1}^n\dot{v}_i\Big(\partial_{v_i}
      +\frac{1}{2}\sum\limits_{\alpha=1}^m\big(\sum\limits_{j=1}^n\partial_{v_j}B^{\alpha}_{ji}\big)
     \partial_{u_{\alpha}}\Big)
     +\sum\limits_{\alpha=1}^m\dot u_{\alpha}\partial_{u_{\alpha}}\\
      &=\sum\limits_{i=1}^n\dot{v}_i\partial_{v_i}+\sum\limits_{\alpha=1}^m\Big(\dot u_{\alpha}
     +\frac{1}{2}\sum\limits_{i,j=1}^nv_jB^{\alpha}_{ji}\dot v_i\Big)\partial_{u_{\alpha}}
       =\dot v+\big(\dot u+\frac{1}{2}[\dot v,v]\big).
\end{split}
\end{equation}
Therefore,
\begin{equation}\label{eq:nabla}
       \bigtriangledown_{\dot{\gamma}}\dot{\gamma}=\bigtriangledown_{\dot{\gamma}}\dot v+\bigtriangledown_{\dot{\gamma}}\big(\dot u+\frac{1}{2}[\dot v,v]\big).
\end{equation}
Let us compute the first term in right hand side of \eqref{eq:nabla}
\begin{gather*}
       \bigtriangledown_{\dot{\gamma}}\dot v=\bigtriangledown_{\dot v+\dot u+\frac{1}{2}[\dot v,v]}\dot v
   =\bigtriangledown_{\dot v}\dot v+\bigtriangledown_{\dot u}\dot v+ \frac{1}{2}\bigtriangledown_{[\dot v,v]}\dot v
   =\ddot v- \frac{1}{2}\eta j(\dot u)\dot v-\frac{1}{4}\eta j([\dot v,v])\dot v\\
   =\ddot v-\frac{1}{2}\eta j\big(\dot u+\frac{1}{2}[\dot v,v]\big)\dot v.
\end{gather*}
Analogously, we calculate the second term
\begin{gather*}
     \bigtriangledown_{\dot{\gamma}}\big(\dot u+\frac{1}{2}[\dot v,v]\big)=\ddot u+\frac{1}{2}[\ddot v,v]-\frac{1}{2}\eta j\big(\dot u+\frac{1}{2}[\dot v,v]\big)\dot v.
\end{gather*}
Therefore,
\begin{gather*}
      \bigtriangledown_{\dot{\gamma}}\dot{\gamma}=\Big(\ddot v-\eta j\big(\dot u+\frac{1}{2}[\dot v,v]\big)\dot v\Big)+\big(\ddot u+\frac{1}{2}[\ddot v,v]\big)
\end{gather*}
and the geodesic $\gamma$ is characterised by the system of equations:
\begin{equation}\label{eq:sysgeod}
\begin{split}
       &\ddot v-\eta j\big(\dot u+\frac{1}{2}[\dot v,v]\big)\dot v=0,\\
       &\ddot u+\frac{1}{2}[\ddot v,v]=0.
\end{split}
\end{equation}
From the second equation of this sytem we can deduce that $\dot u+\frac{1}{2}[\dot v,v]=$constant. It is enough to describe geodesics 
which pass through $0$, since the left translation of every geodesic is again geodesic. Let us assume the following initial conditions for $\gamma$:
\begin{gather*}
     \gamma(0)=0,\qquad \dot{\gamma}(0)=\dot v^0+\dot u^0,
\end{gather*}
which implies that
\begin{gather*}
        v(0)=0,\qquad u(0)=0,\\
        \dot v(0)=\dot v^0,\qquad \dot u(0)=\dot u^0.
\end{gather*}
Then 
\begin{equation}\label{eq:uequation}
        \dot u+\frac{1}{2}[\dot v,v]=\dot u^0\quad \mbox{for all}\quad t\in[0,1]
\end{equation}
and the first equation of \eqref{eq:sysgeod} takes the form
\begin{equation}\label{eq:main}
     \ddot v-\eta j(\dot u^0)\dot v=0.
\end{equation}
Observe that this equation coincides with the one for sub-Riemannian geodesics when $\eta$ is an identity matrix in $V$ \cite{Kap2}.

To find a solution $v(t)$ of \eqref{eq:main} let us first recall some facts from linear algebra.

\subsection{Skew-symmetry with respect to semi-Riemannian metric}

Recall that $j(u)$ can be written as a real skew-symmetric $(n\times n)$ matrix: $j=-j^t$ if we fix a basis on $V$. 
Therefore, it has purely imaginary 
eigenvalues $\pm i\mu_k$, $\mu_k\in \mathbb{R}$, $k=1,\ldots,\frac{n}{2}$, 
since $n$ is an even number. It is possible to bring every skew-symmetric matrix to a block diagonal form 
by an orthogonal transformation. Thus, $j$ can be written in the form 
\begin{equation}\label{eq:jdecomp}
j=Q\tilde{j}Q^t,
\end{equation}
 where $\tilde{j}$ is a Murnaghan's canonical form of $j$:
\begin{gather*}
\tilde{j}=
\left(\begin{array}{cccc}
\begin{array}{cc}
 0& \mu_1\\
  -\mu_1&0\\
\end{array}
 & 0 & \ldots & 0\\
0 & \begin{array}{cc}
 0& \mu_2\\
  -\mu_2&0\\
\end{array} & \ldots & 0\\
\vdots & \vdots & \ddots & \vdots\\
0 & \ldots & 0 & \begin{array}{cc}
 0& \mu_{n/2}\\
  -\mu_{n/2}&0\\
\end{array}
\end{array}\right)
\end{gather*} and $Q$ is an orthogonal transformation. From the condition $j=-j^t$ we obtain also that $\tilde j=-\tilde j^t$. And 
since $j(u)$ has the property $i)$ of \eqref{eq:j}, all the eigenvalues of $j$ satisfy the condition $\mu_k^2=|u|^2$, 
$k=1,\ldots,\frac{n}{2}$.

Denote by $A$ the matrix $\eta j(u)$ from \eqref{eq:main}. The condition $ii)$ of \eqref{eq:j} yields that the matrix $A$ is skew-symmetric 
with respect to the scalar product on $V$ and it can be 
rewritten as $w^t\eta Av+(Aw)^t\eta v=0$, $\forall v,w\in V$, from where we obtain that 
\begin{equation}\label{eq:skew-symm}
   \eta A+A^t\eta=0,
\end{equation}
that is, $A^t=-\eta A\eta$ (skew-symmetric w.~r.~t. $\eta$). 

Every square matrix can be represented as a sum of symmetric and skew-symmetric ones, but in our case these matrices are of very 
special form, that follows from \eqref{eq:j-skew}:
\begin{equation}\label{eq:jview2}
    j=\left(%
\begin{array}{cc}
  \varPhi_p&\varPsi\\
  -\varPsi^t& \varPhi_q
\end{array}
    \right)=\left(%
\begin{array}{cc}
  \varPhi_p&0\\
  0& \varPhi_q
\end{array}
    \right)+
\left(%
\begin{array}{cc}
  0&\varPsi\\
  -\varPsi^t & 0
\end{array}
    \right)= A_1+A_2,
\end{equation}
where $\varPhi_p$ and $\varPhi_q$ are matrices of dimensions $p\times p$ and $q\times q$ 
respectively such that $\varPhi_p=-\varPhi_p^t$ and $\varPhi_q=-\varPhi_q^t$; $\varPsi$ is a $(p\times q)$-matrix and, therefore, matrix $A_2$ is skew-symmetric. 
In analogous way using \eqref{eq:skew-symm} we may represent the matrix $A=\eta j$ as follows
\begin{equation}\label{eq:Aview}
\begin{split}
    A=\left(%
\begin{array}{cc}
  -I_p&0\\
  0& I_q
\end{array}
    \right)
\left(%
\begin{array}{cc}
  \varPhi_p&\varPsi\\
  -\varPsi^t& \varPhi_q
\end{array}
    \right)=
\left(%
\begin{array}{cc}
  -\varPhi_p&-\varPsi\\
  -\varPsi^t& \varPhi_q
\end{array}
    \right)=\left(%
\begin{array}{cc}
  -\varPhi_p&0\\
  0& \varPhi_q
\end{array}
    \right)+
\left(%
\begin{array}{cc}
  0&-\varPsi\\
 -\varPsi^t & 0
\end{array}
    \right)\\=\tilde A_1+\tilde A_2,
\end{split}
\end{equation}
The essential distinction 
between matrices $j$ and $A=\eta j$  is that in the decomposition \eqref{eq:jview2} of $j$ 
both matrices $A_1$ and $A_2$ are skew-symmetric, 
but in decomposition \eqref{eq:Aview} of $A$ the matrix $\tilde A_1$ is skew-symmetric, but $\tilde A_2$ is symmetric.

The condition \eqref{eq:skew-symm} is the condition for $A$ to be skew-symmetric matrix with respect to 
$\langle\cdot,\cdot\rangle_V$ and is analogue of skew-symmetry condition \eqref{eq:j-skew} for positively definite metric. 
Using it now together with the condition $i)$ of \eqref{eq:j} we get
\begin{gather*}
    j^2(u)=\eta A\eta A=-A^t\eta\eta A=-A^tA=-|u|^2I_V.
\end{gather*}
From another hand,
\begin{gather*}
    j^2(u)=\eta A\eta A=-\eta AA^t\eta=-|u|^2I_V,
\end{gather*}
from where it follows that $AA^t=\eta|u|^2I_V\eta=|u|^2I_V$.
Therefore,
\begin{equation}\label{eq:Aat}
   AA^t=A^tA=|u|^2I_V.
\end{equation} 

This allows us to rewrite Kaplan's conditions \eqref{eq:j} as follows. Consider a mapping $A_u\colon V\to V$ (we will omit index $u$
 for the sake of simplicity), which 
is assigned to each $u\in U$ and satisfies 
the following properties:
\begin{equation*}
   \begin{split}
          &i') \;\; AA^t=A^tA=   |u|^2I_V,\\
          &ii') \;\;\langle Av,w\rangle_V=-\langle v,Aw\rangle_V.
   \end{split}
\end{equation*}
The calculations above show that the conditions $i)$ and $i')$ are equivalent; the condition $ii')$ is just a 
rewritten form of $ii)$ in terms of matrix $A$. Therefore, we get an independent definition of sub-semi-Riemannian $H$-type group.

Another effect of the property \eqref{eq:Aat} is that 
$A$ appears to have $n$ mutually orthogonal unit eigenvectors and is a diagonalisable matrix (over 
$\mathbb{C}$ in general). 
Observe that the 
matrix $A^2$ is not symmetric, but it 
satisfies the condition $(A^2)^t=\eta A^2\eta$ (symmetric with respect to $\eta$).

It is also worth mentioning that even though the matrix $A$ is real, 
the condition $A^t=A^*$ for adjoint matrix $A^*$  is not true in our situation as in the case of 
positively definite scalar product.
We define the adjoint matrix $A^{*,\eta}$ in our case by $\langle Av,w\rangle_V=\langle v,A^{*,\eta}w\rangle_V$. It 
has the following view $A^{*,\eta}=\eta A^t\eta$, i.~e. $A^{*,\eta}=-A$. Then the 
condition of $A$ being normal holds: $AA^{*,\eta}=A^{*,\eta}A=-A^2$.

Let us write the characteristic polynomial for the matrix $A$:
\begin{equation}
 \label{eq:char}
  p_A(\lambda)=\lambda^n+b_{n-1}\lambda^{n-1}+\ldots+b_2\lambda^2+b_1\lambda+b_0=0,
\end{equation}
where the coefficients $b_k$ in front of $\lambda^k$, $k=0,\ldots,n-1$, 
are computed by the known formula:
\begin{gather*}
  b_k=(-1)^k\sum\limits_{i=1}^n A^i_k, 
\end{gather*}
where by symbol $A^i_k$ we denote $(k\times k)$-dimensional principal minor of the matrix $A$, 
that is the determinant of the matrix 
obtained from the matrix $A$ by erasing its $n-k$ diagonal elements. 
For a $(n\times n)$-square matrix $A$ there are $n$ $(k\times k)$-dimensional principal minors for each $k=1,\ldots,n$. 
Notice that if $k$ is odd, $n-k$ is also odd.
\begin{lemma}
   All odd-dimensional principal minors of skew-symmetric w.~r.~t. $\eta$ matrix $A$ are zero.
\end{lemma}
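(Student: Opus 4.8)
The plan is to work directly from the defining property \eqref{eq:skew-symm} of a matrix $A$ that is skew-symmetric with respect to $\eta$, namely $A^t=-\eta A\eta$, together with the fact that $\eta$ is \emph{diagonal}. Fix an index set $S\subseteq\{1,\dots,n\}$ with $|S|=k$ and let $A_S$ denote the principal submatrix of $A$ obtained by keeping only the rows and columns indexed by $S$ --- equivalently, by erasing the $n-k$ diagonal entries not in $S$ together with their rows and columns. By definition the $k$-dimensional principal minors of $A$ are exactly the numbers $\det A_S$, so it suffices to show $\det A_S=0$ whenever $k$ is odd.

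First I would observe that passing to a principal submatrix is compatible with the identity $A^t=-\eta A\eta$: since $\eta$ is diagonal, deleting the rows and columns outside $S$ yields $A_S^t=-\eta_S A_S\eta_S$, where $\eta_S=\diag(\varepsilon_i:i\in S)$ is again diagonal with entries $\pm1$, so that $(\det\eta_S)^2=1$. (Alternatively one may use the factorization $A=\eta\, j(u)$ behind \eqref{eq:jview2}--\eqref{eq:Aview}: a principal submatrix of $\eta j$ equals $\eta_S j_S$ with $j_S$ a principal submatrix of the genuinely skew-symmetric matrix $j$, hence itself skew-symmetric.) The key step is then a short determinant computation: taking determinants in $A_S^t=-\eta_S A_S\eta_S$ gives
\[
\det A_S=\det A_S^t=(-1)^k(\det\eta_S)^2\det A_S=(-1)^k\det A_S,
\]
and for $k$ odd this forces $\det A_S=0$.

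Since $S$ was an arbitrary $k$-subset, every $k$-dimensional principal minor of $A$ vanishes when $k$ is odd; in particular the coefficients $b_k$ in the characteristic polynomial \eqref{eq:char} vanish for odd $k$. There is no genuine obstacle in this argument; the only point requiring a moment's care is the bookkeeping that a principal submatrix is obtained by deleting a \emph{matching} set of rows and columns, which is precisely what keeps $\eta_S$ diagonal (and, on the alternative route, keeps $j_S$ skew-symmetric) --- for a non-principal minor neither property would survive, which is exactly why the lemma is restricted to principal minors.
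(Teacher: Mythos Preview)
Your proof is correct and follows essentially the same route as the paper's: both pass the relation $A^t=-\eta A\eta$ to the principal submatrix (using that $\eta$ is diagonal), take determinants, and use $(\det\eta_S)^2=1$ to produce the factor $(-1)^k$. If anything you are more explicit than the paper about why the restriction to a \emph{principal} submatrix preserves the skew-symmetry relation with respect to $\eta_S$; the paper simply writes $\eta$ where the appropriate submatrix $\eta_S$ is meant.
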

\begin{proof}
We calculate
 $$\det (A^i_k)^t=\det A^i_k=\det\big(-\eta (A^i_k)^t\eta\big)=\det \big(-(A^i_k)^t\big)=
 \det(-I_k)\det(A^i_k)^t=(-1)^k\det (A^i_k)^t,$$
where $I_k$ is a $(k\times k)$-identity matrix. From here it follows that $\det A^i_k=0$ for odd $k$.
\end{proof}

Therefore, the characteristic polynomial \eqref{eq:char} receives the following form:
\begin{equation}\label{eq:char_new}
    p_A(\lambda)=\lambda^n+a_{\frac{n}{2}-1}\lambda^{n-2}+\ldots+a_1\lambda^2+a_0=0,
\end{equation}
where $a_k=b_{2k}$, $k=1,\ldots,\frac{n}{2}-1$, and $a_0=b_0=\det A$. Observe that $a_0\neq0$ since the dimension $n$ of the matrix $A$ is 
even. Therefore, $0$ is not an eigenvalue of $A$ and all eigenvalues of $A$ come in pairs $\pm\lambda_k$, $k=1,\ldots \frac{n}{2}$.

\begin{lemma}\label{prop1}
 If $\lambda$ is an eigenvalue for $A$ with eigenvector $v$, then $-\lambda$ is an eigenvalue for $A^t$ with eigenvector $\eta v$.
\end{lemma}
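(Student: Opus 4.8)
The plan is to reduce everything to the relation \eqref{eq:skew-symm}, rewritten in the form $A^t=-\eta A\eta$, together with the elementary identity $\eta^2=I_V$ that is immediate from \eqref{eq:eta}. Starting from the hypothesis $Av=\lambda v$ with $v\neq 0$, I would simply apply $A^t$ to the vector $\eta v$:
\begin{equation*}
A^t(\eta v)=(-\eta A\eta)(\eta v)=-\eta A(\eta^2 v)=-\eta A v=-\eta(\lambda v)=-\lambda(\eta v).
\end{equation*}
This is the whole computation, so no serious obstacle is expected; the one small point worth stating explicitly is that $\eta v\neq 0$, which holds because $\eta$ is invertible (indeed $\eta^{-1}=\eta$) and $v\neq 0$. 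Hence $\eta v$ is a genuine eigenvector of $A^t$ with eigenvalue $-\lambda$.

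If desired, I would also remark that this lemma dovetails with the earlier observation that the eigenvalues of $A$ come in pairs $\pm\lambda_k$: since $A$ and $A^t$ have the same characteristic polynomial, the eigenvalue $-\lambda$ produced here is consistent with $p_A$ being even in $\lambda$ (equation \eqref{eq:char_new}). I would present the statement for a single eigenpair as above and let the pairing of eigenvalues follow as a corollary, rather than the other way around, since the displayed identity is the cleanest route and uses only \eqref{eq:skew-symm}.
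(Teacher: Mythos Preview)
Your proof is correct and rests on the same relation the paper uses, namely $A^t=-\eta A\eta$ from \eqref{eq:skew-symm} together with $\eta^2=I_V$. The paper phrases the computation through the inner product, writing $\langle Av,w\rangle_V$ two ways and invoking nondegeneracy, whereas you apply $A^t$ to $\eta v$ directly; this is a minor stylistic difference, and your version is in fact the cleaner of the two.
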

\begin{proof}
    Notice that $\langle v,w\rangle_V=\langle \eta v,\eta w\rangle_V$ for any two vectors $v$ and $w$ in $V$. We calculate
\begin{gather*}
    \langle Av,w\rangle_V=\langle \lambda v,w\rangle_V=\lambda \langle v,w\rangle_V=\lambda\langle\eta v,\eta w \rangle_V, 
\quad \forall w\in V.
\end{gather*}
But from another hand,
\begin{gather*}
    \langle Av,w\rangle_V=\langle -\eta A\eta^t v,w\rangle_V=-\langle A^t\eta v,\eta w\rangle_V,\quad \forall w\in V,
\end{gather*} 
yielding the desired result.
\end{proof}

The operator $A$ has $n$ in general complex eigenvalues and the corresponding complex eigenvectors. The following fact is known, 
see, for example, \cite{Vinberg}.
\begin{proposition}\label{complex}
    Let $A$ be a $(n\times n)$-matrix with real entries. If the equation $\det(A-\lambda I)=0$ has a complex solution $\lambda=\alpha+i\beta$, $\beta\neq0$, then there exist nonzero vectors $x$
 and 
    $y$ in $V$ such that
    $$Ax=\alpha x-\beta y,\qquad Ay=\alpha y+\beta x.$$
    In particular, $\spn\{x,y\}$ is a nontrivial invariant subspace for $A$.
\end{proposition}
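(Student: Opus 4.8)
The plan is to work with the complexification $V^{\C}=V\otimes\C$ and exploit the reality of $A$. First I would write $\lambda=\alpha+i\beta$ with $\beta\neq0$ and let $z\in V^{\C}$ be a nonzero complex eigenvector, $Az=\lambda z$. Decompose $z=x+iy$ with $x,y\in V$ (the ``real'' and ``imaginary'' parts). Applying $A$ (which has real entries, so it commutes with complex conjugation on $V^{\C}$) gives $Ax+iAy=(\alpha+i\beta)(x+iy)=(\alpha x-\beta y)+i(\beta x+\alpha y)$. Matching real and imaginary parts yields exactly the two stated relations $Ax=\alpha x-\beta y$ and $Ay=\alpha y+\beta x$.

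Next I would check that $x$ and $y$ are both nonzero and in fact linearly independent over $\R$. If $y=0$ then $z=x$ is a real eigenvector with $Ax=\lambda x=\alpha x+i\beta x$, forcing $\beta x=0$, hence $x=0$ since $\beta\neq0$, contradicting $z\neq0$; symmetrically $x\neq0$. For independence, suppose $y=cx$ for some $c\in\R$; then $z=(1+ic)x$ and $Az=\lambda z$ reads $(1+ic)Ax=\lambda(1+ic)x$, so $Ax=\lambda x$, which again is impossible for a real vector with nonreal $\lambda$. (Alternatively, one can invoke that the conjugate $\bar z=x-iy$ is an eigenvector for the conjugate eigenvalue $\bar\lambda\neq\lambda$, so $z$ and $\bar z$ are independent, whence so are $x=\tfrac12(z+\bar z)$ and $y=\tfrac{1}{2i}(z-\bar z)$.) Finally, the two displayed identities show directly that $\spn_{\R}\{x,y\}$ is mapped into itself by $A$, and it is two-dimensional by the independence just established, so it is a nontrivial invariant subspace.

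The only thing that deserves care — and what I would flag as the main (minor) obstacle — is the passage between the complex eigenvalue/eigenvector picture over $V^{\C}$ and the real two-dimensional invariant subspace of $V$, i.e. making sure the ``real and imaginary parts'' are genuinely $\R$-independent rather than just not both zero; the hypothesis $\beta\neq0$ is precisely what guarantees this. Everything else is bookkeeping with the reality of $A$ and does not use any special structure of $A=\eta j(u)$, so the proposition is really just the standard fact recorded in \cite{Vinberg}; I would present it in the short form above.
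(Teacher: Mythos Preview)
Your argument is correct and is precisely the standard complexification proof: pick a complex eigenvector $z=x+iy$ for $\lambda=\alpha+i\beta$, use that $A$ is real to separate real and imaginary parts, and then use $\beta\neq0$ to rule out degeneracy of $x$ and $y$. The extra step you include---showing that $x$ and $y$ are in fact $\R$-linearly independent, not merely both nonzero---is more than the statement literally asks for, but it is exactly what is needed to conclude that $\spn\{x,y\}$ is two-dimensional and hence genuinely nontrivial.

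As for comparison with the paper: the paper does not give a proof of this proposition at all. It introduces the statement with ``The following fact is known, see, for example, \cite{Vinberg}'' and leaves it at that. So your proposal is not a different route from the paper's proof; it simply supplies the omitted textbook argument. There is nothing to correct.
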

Notice that if $\lambda = \alpha+i\beta$ is a solution of characteristic polynomial \eqref{eq:char_new} with multiplicity $l$, 
then its conjugate $\bar\lambda=\alpha-i\beta$ is also solution of \eqref{eq:char_new} with multiplicity $l$.
Due to this fact, lemma \ref{prop1}, proposition \ref{complex} and the fact that $A$ is diagonalisable over the complex space $V$, 
the following decomposition of $A$ is possible: $A=PDP^{-1}$, where $P$ is orthogonal matrix and 
$D$ is a block-diagonal matrix with ($2\times2$)-blocks of 
the following possible forms:
\begin{equation}\label{eq:real}
\left( \begin{array}{cc}
         \lambda_i&0\\
         0&-\lambda_i
    \end{array}\right),\quad i=1,\ldots s,
\end{equation}
\begin{equation}\label{eq:imaginary}
\left( \begin{array}{cc}
         i\nu_j&0\\
         0&-i\nu_j
    \end{array}\right),\quad j=s+1,\ldots r,
\end{equation}
and  $(4\times4)$-blocks of the form
\begin{equation}\label{eq:complex}
\left( \begin{array}{cc}
\begin{array}{cc}
         \alpha_k&\beta_k\\
         -\beta_k&\alpha_k
\end{array}&0\\
0&\begin{array}{cc}
-\alpha_k&-\beta_k\\
\beta_k&-\alpha_k
\end{array}
    \end{array}\right),\quad k=1,\ldots \frac{n}{4}-\frac{r}{2},
\end{equation}
where the eigenvalues are rearranged in the following order:
\begin{itemize}
\item $\pm\lambda_1,\ldots,\pm\lambda_s$, for some $s\in\{0,\ldots, \frac{n}{2}\}$, are real eigenvalues of $A$, \\
\item $\pm\lambda_{s+1}=\pm i\nu_{s+1},\ldots,\pm\lambda_r=\pm i\nu_r$, for some $r\in\{0,\ldots,\frac{n}{2}\}$, $r\geqslant s$,
 are purely imaginary eigenvalues of $A$, and \\
\item $\pm\lambda_{{r+1}}=\pm(\alpha_1+ i\beta_1)$, $\pm\bar\lambda_{r+1}$, $\ldots$, 
$\pm\lambda_{{\frac{n}{4}-\frac{r}{2}}}=\pm(\alpha_{\frac{n}{4}-\frac{r}{2}}+ i\beta_{\frac{n}{4}-\frac{r}{2}})$, 
$\pm\bar\lambda_{\frac{n}{4}-\frac{r}{2}}$ are complex 
eigenvalues of $A$ corresponding to $\frac{n}{4}-\frac{r}{2}$ blocks of the dimension $(4\times4)$. 
\end{itemize}
Notice that the integer $r$ is necessarily even. 
It is natural that $(4\times4)$-dimensional blocks associated with complex 
eigenvalues $\pm(\alpha\pm i\beta)$ can appear only if the dimension $n$ 
is bigger or equal to 4.

Observe that for real eigenvalues $\lambda_i$ the matrix  $\left(\begin{array}{cc}\lambda_i&0\\0&-\lambda_i\end{array}\right)$, 
$i=1,\ldots,s$,
 is a diagonalised form 
for the matrix $\left(\begin{array}{cc}0&\lambda_i\\\lambda_i&0\end{array}\right)$; and for purely imaginary eigenvalues 
$\lambda_j=i\nu_j$ the matrix $\left( \begin{array}{cc}i\nu_j&0\\
         0&-i\nu_j
    \end{array}\right)$  is a diagonalised form of $\left( \begin{array}{cc}
         0&\nu_j\\
         -\nu_j&0
    \end{array}\right)$, $j=s+1,\ldots,r$.

Notice that even though $A$ is skew-symmetric w.~r.~t. $\eta$, 
i.~e. the condition \eqref{eq:skew-symm} holds, 
the matrix $D$ does not have this property in general as long as $\eta$ is not identity matrix. 

\begin{lemma}
   All eigenvalues of $A$ have absolute value $|u|$.
\end{lemma}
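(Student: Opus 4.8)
The plan is to derive the statement entirely from the identity \eqref{eq:Aat}, namely $A^tA=AA^t=|u|^2I_V$. If $|u|=0$ this already reads $A^tA=0$, and for a real matrix that forces $A=0$ (since $v^tA^tAv=|Av|^2$ would vanish for every real $v$), so the only eigenvalue of $A$ is $0=|u|$ and we are done. Assume therefore $|u|\neq0$. Then \eqref{eq:Aat} says exactly that the real matrix $\frac{1}{|u|}A$ is orthogonal, and it is classical that an orthogonal matrix has all of its complex eigenvalues on the unit circle; rescaling gives $|\lambda|=|u|$ for every eigenvalue $\lambda$ of $A$. So the whole content reduces to that unit-circle fact, which I would include for completeness, since with our $\eta$ the matrix $A$ is orthogonal but not self-adjoint.

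For the unit-circle fact I would argue directly with a complex eigenvector. Let $\lambda\in\C$ be an eigenvalue of $A$ and $v$ a corresponding eigenvector in the complexification of $V$, normalised so that $\bar v^tv=1$ for the standard Hermitian inner product. Since $A$ has real entries, conjugating $Av=\lambda v$ gives $A\bar v=\bar\lambda\bar v$, and transposing this identity gives $\bar v^tA^t=\bar\lambda\,\bar v^t$. Multiplying on the right by $Av=\lambda v$ and using \eqref{eq:Aat},
\begin{equation*}
   |u|^2=|u|^2\,\bar v^tv=\bar v^tA^tAv=\bar\lambda\,\bar v^t(Av)=\bar\lambda\lambda\,\bar v^tv=|\lambda|^2 ,
\end{equation*}
so $|\lambda|=|u|$, as claimed.

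As an independent check, the statement can also be read off the real canonical form $A=PDP^{-1}$ obtained above: substituting it into $A^tA=|u|^2I_V$ and conjugating by $P$ forces $\lambda_i^2=|u|^2$ on each real block \eqref{eq:real}, $\nu_j^2=|u|^2$ on each purely imaginary block \eqref{eq:imaginary}, and $\alpha_k^2+\beta_k^2=|u|^2$ on each $(4\times4)$-block \eqref{eq:complex}. I expect the only point requiring care here to be the distinction between transpose and conjugate transpose over $\C$: for the blocks with non-real spectrum the conjugating matrix $P$ is only \emph{complex} orthogonal, so $D^tD$ does not literally compute $A^tA$, and it is cleaner to run the Hermitian computation above. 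Beyond that bookkeeping and the trivial $|u|=0$ case, I do not anticipate any real obstacle — everything is already packaged into \eqref{eq:Aat}.
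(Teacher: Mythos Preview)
Your proof is correct. Your primary argument --- the direct Hermitian eigenvector computation from \eqref{eq:Aat} --- is cleaner and more elementary than the paper's route. The paper instead substitutes the block decomposition $A=PDP^{-1}$ (with $P$ orthogonal) into $AA^t=|u|^2I_V$, obtains $DD^t=|u|^2I_V$, and reads off $|\lambda_k|^2=|u|^2$ from the diagonal of $DD^t$ block by block; this is precisely your ``independent check.'' Your caution about transpose versus conjugate transpose is well placed: the paper's computation is only literally correct if one uses the \emph{real} block form of $D$ (so that $P$ is real orthogonal and $DD^t$ genuinely has entries $|\lambda_k|^2$), not the complex diagonal form it also writes down for the purely imaginary blocks. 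Your Hermitian argument sidesteps that bookkeeping entirely and does not depend on first establishing the block structure of $D$.
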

\begin{proof}
 Let us calculate the product $AA^t=PDP^{-1}(P^t)^{-1}D^tP^t=PDD^tP^t$. Observe that the matrix $DD^t$ is a diagonal matrix with entries 
$|\lambda_k|^2$, $k=1,\ldots,\frac{n}{2}$. Then, using the property \eqref{eq:Aat}, we see that all eigenvalues of $A$, 
real and complex ones, have the same absolute 
values: $|\lambda_k|^2=|u|^2$, $k=1,\ldots,\frac{n}{2}$. 
\end{proof}

Taking into account previous comments about eigenvalues we can write that 
$$\lambda_1=\ldots=\lambda_s=\nu_{s+1}=\ldots=\nu_r=|u|,$$ $s,r\in\{0,\ldots,\frac{n}{2}\}$ and $\alpha_k^2+\beta_k^2=|u|^2$, 
$k=1,\ldots,\frac{n}{4}-\frac{r}{2}$. \\
Thus, we showed that the matrix $D$ is built from the the following cells:
\begin{gather*}
\left(%
\begin{array}{cc}
  0&|u|\\
  |u|&0
\end{array}
    \right), \qquad \left(%
\begin{array}{cc}
  0&|u|\\
  -|u|&0
\end{array}
    \right)\quad \text{ or }\quad
\left(%
\begin{array}{cccc}
  \alpha&\beta&0&0\\
  -\beta&\alpha&0&0\\
  0&0&-\alpha&-\beta\\
  0&0&\beta&-\alpha
\end{array}\right),
\end{gather*} where $\alpha^2+\beta^2=|u|^2$.

The appearance of one or another cell of these types in the matrix $D$ 
depends on the index $p$ 
of the metric $g$. 

Below we show that the number $s$ of pairs of real eigenvalues of $A$ is equal to $1$ in the case of odd index $p$ of the metric $\eta$, 
and $s=0$ in case of $p$ is even. In other words, if index $p$ is even, 
then there are no real eigenvalues of $A$, and if $p$ is odd, there are only two real eigenvalues that are of the form $\pm|u|$.

{\bf Note}.
Notice that $\det A=\det D=\big(-|u|^2\big)^s\big(|u|^2\big)^{r-s}\big(|u|^2\big)^{\frac{n}{2}-r}=(-1)^s|u|^n$.
But on the other hand from \eqref{eq:jdecomp} we have $\det A=\det \big(\eta j\big)= \det\big(\eta Q\tilde j Q^{-1}\big)=(-1)^p|u|^n$. 
Thus, if $p$ is odd, then $s$ is odd too, and if $p$ is even, then $s$ is also even. Notice also that $r-s$ has the same parity as $s$.

{\bf Remark}.
We restrict our consideration only to the case when the index of the metric $p$ is less or equal to $\frac{n}{2}$, because in an 
opposite situation we can study the case with the metric $-\eta$, which is analogous.

\subsection{Matrices $\bf{j_1,\ldots,j_m}$}\label{matrices}

We remind that matrix $j(u)$ does not depend on the metric $\eta$ and is the same as for sub-Riemannian $H$-type group, i.~e. can 
be written via endomorphisms $j_1,\ldots,j_m$ represented by structure constants \eqref{eq:B} in the following form:
$$j(u)=u_1j_1+\ldots +u_mj_m,\quad u=(u_1,\ldots,u_m)\in U.$$
Each of $j_{\alpha}$ is, what is called in linear algebra, a one-column and one-row matrix with $1$ at the intersection of $i$-th row and $j$-th column, 
$-1$ at the intersection of $j$-th row and $i$-th column, for $i\neq j$ and each $i,j<n$ and zeros everywhere else (see, for example, 
matrices $j_{\alpha}$, $\alpha=1,\ldots,7$ from Section \ref{Example}); 
and satisfies the following properties
\begin{equation}\label{eq:proper}
  j^t_{\alpha}=-j_{\alpha},\quad j_{\alpha}^2=-I_V,\quad j_{\alpha}j_{\beta}=-j_{\beta}j_{\alpha},\quad \alpha\neq\beta.
\end{equation}
The block-diagonal $(n\times n)$-matrix consisting of blocks of type $\left(\begin{array}{cc}
0&\pm1\\\mp1&0\end{array}\right)$ is one of them, and let us denote it by $j_1$ for simplicity.
Since $\Cl(U,m)$ by definition is the associative algebra freely generated by $m$ anti-commuting square roots of minus unity, one can 
associate matrices $j_1,\ldots,j_m$ with square roots of $-I_V$. Their linear combination represents the corresponding endomorphism $j(u)$. 
The matrices $j_{\alpha}$ have $n$ eigenvalues which come in pairs $\pm i$ and, therefore, $\det j_{\alpha}=1$ for each 
$\alpha=1,\ldots,m$.

We remind that $j(u)$ is a $(n\times n)$-matrix with the following properties:
\begin{itemize}
 \item $j^t(u)=-j(u)$;\\
 \item $j^2(u)=-|u|^2I_V$;\\
 \item $j^{-1}(u)=-\frac{1}{|u|^2}I_V$;\\
 \item the eigenvalues of $j(u)$ are all of the form $\pm i|u|$;\\
 \item the determinant of $j(u)$ $\det j(u)=|u|^n$.
\end{itemize}
From here we can catch out the following properties of matrices $\eta j_{\alpha}$, $\alpha=1,\ldots,m$.
\begin{lemma}\label{pro1}
  Each matrix $\eta j_{\alpha}$ is skew-symmetric w.~r.~t. $\eta$, i.~e. $\eta j_{\alpha}+j_{\alpha}^t\eta=0$.
\end{lemma}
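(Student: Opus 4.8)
Proof plan for Lemma \ref{pro1} ($\eta j_\alpha$ is skew-symmetric w.r.t.\ $\eta$, i.e.\ $\eta j_\alpha + j_\alpha^t\eta = 0$).

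The plan is to reduce the claim to the two facts about the matrices $j_\alpha$ that have already been established in the excerpt, namely $j_\alpha^t = -j_\alpha$ (the first identity in \eqref{eq:proper}) and the involutivity $\eta^2 = I_V$, which is immediate from the explicit form \eqref{eq:eta} of $\eta$. This is a direct one-line computation, so the "proof" is essentially bookkeeping; there is no genuine obstacle.

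First I would write $A_\alpha := \eta j_\alpha$ and compute its ordinary transpose: $A_\alpha^t = (\eta j_\alpha)^t = j_\alpha^t \eta^t = j_\alpha^t \eta$, using that $\eta$ is symmetric ($\eta^t = \eta$, again from \eqref{eq:eta}). Now apply $j_\alpha^t = -j_\alpha$ from \eqref{eq:proper} to get $A_\alpha^t = -j_\alpha \eta$. Then I would verify the skew-symmetry relation by plugging directly into \eqref{eq:skew-symm}:
\begin{gather*}
\eta A_\alpha + A_\alpha^t \eta = \eta(\eta j_\alpha) + (-j_\alpha\eta)\eta = \eta^2 j_\alpha - j_\alpha \eta^2 = j_\alpha - j_\alpha = 0,
\end{gather*}
since $\eta^2 = I_V$. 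This is precisely the condition \eqref{eq:skew-symm} with $A$ replaced by $A_\alpha = \eta j_\alpha$, so $\eta j_\alpha$ is skew-symmetric with respect to $\eta$. Equivalently, one may phrase it as $A_\alpha^t = -j_\alpha\eta = -\eta(\eta j_\alpha)\eta = -\eta A_\alpha \eta$, which is the form $A^t = -\eta A\eta$ noted right after \eqref{eq:skew-symm}.

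As an alternative route that ties the statement more tightly to the axioms, I could instead invoke that $j(u) = u_1 j_1 + \cdots + u_m j_m$ satisfies condition $ii)$ of \eqref{eq:j} for every $u$, hence so does $A = \eta j(u)$ in the form \eqref{eq:skew-symm}; specializing $u = \partial_{u_\alpha}$ (the $\alpha$-th standard basis vector of $U$) collapses the sum to $j_\alpha$ and yields $\eta j_\alpha + j_\alpha^t \eta = 0$ at once. Either way the content is the same, and no step presents any difficulty — the matrices $j_\alpha$ inherit the structural property \eqref{eq:skew-symm} from $j(u)$ by linearity in $u$.
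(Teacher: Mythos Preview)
Your proposal is correct, and your second route---specializing $u=\partial_{u_\alpha}$ in the already-proven identity \eqref{eq:skew-symm} for $A=\eta j(u)$---is exactly what the paper does: its entire proof is the one-line remark that ``the proof is the same as for an analogous statement \eqref{eq:skew-symm} for the matrix $A=\eta j(u)$.'' Your first route (the direct computation from $j_\alpha^t=-j_\alpha$ and $\eta^2=I_V$) is just the explicit unwinding of that same argument, so there is no substantive difference in approach.
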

\begin{proof}
   The proof is the same as for an analogous statement \eqref{eq:skew-symm} for the matrix $A=\eta j(u)$ in the previous 
sub-section.
\end{proof}
Analogous argument as for matrix $A$ can be applied to show that all eigenvalues of matrices $\eta j_{\alpha}$, $\alpha=1,\ldots,m$, 
 have absolute value $1$ and come in pairs $\pm\lambda_k$, $k=1,\ldots,\frac{n}{2}$.
\begin{lemma}\label{pro2}
  All eigenvalues for matrix $\eta j_{\alpha}$ are of the form $\pm1$ or $\pm i$, for each $\alpha=1,\ldots,m$.
\end{lemma}
\begin{proof}
 If $a+ib$ is an eigenvalue for $\eta j_{\alpha}$, then by proposition \ref{complex} there exist vectors $x$ and $y$ 
such that $\eta j_{\alpha}x=a x-by$ and $\eta j_{\alpha}y=ay+b x$. Thus, on the space spanned by vectors $x$ and $y$ 
the linear transformation $\eta j_{\alpha}$ has the matrix
\begin{gather*}
  \left(\begin{array}{cc}
         a&b\\
         -b&a
        \end{array}
  \right),
\end{gather*}
which is possible only if $a=0$ or $b=0$, since $\eta j_{\alpha}$ is a 1-row and 1-column matrix. Due to the fact that 
all eigenvalues of $\eta j_{\alpha}$ have absolute value $1$, they should be of the form $\pm1$ or $\pm i$.
\end{proof}

\begin{lemma}\label{pro3}
   Matrices $\eta j_{\alpha}$, $\alpha=1,\ldots,m$, anti-commute with each other.
\end{lemma}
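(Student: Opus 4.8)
The plan is to clear the metric tensor $\eta$ out of the two products and reduce the anti-commutation of the $\eta j_\alpha$ to the anti-commutation \eqref{eq:proper} of the generators $j_\alpha$ themselves. Fix $\alpha\neq\beta$. Using $\eta^2=I_V$ one writes
\begin{gather*}
 (\eta j_\alpha)(\eta j_\beta)=\eta\,(j_\alpha\eta)\,j_\beta,\qquad
 (\eta j_\beta)(\eta j_\alpha)=\eta\,(j_\beta\eta)\,j_\alpha ,
\end{gather*}
so it suffices to prove the identity $j_\alpha\eta j_\beta=-\,j_\beta\eta j_\alpha$. Taking transposes and using $\eta^t=\eta$ and $j_\gamma^t=-j_\gamma$ from \eqref{eq:proper} gives $(j_\alpha\eta j_\beta)^t=j_\beta^t\eta\,j_\alpha^t=j_\beta\eta j_\alpha$; thus the identity to be established is exactly the statement that $j_\alpha\eta j_\beta$ is skew-symmetric with respect to the ordinary Euclidean product on $V$.

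The key ingredient is the way $\eta$ passes through a single generator, which I would extract from Lemma~\ref{pro1}. That lemma asserts $\eta j_\alpha+j_\alpha^t\eta=0$, and in view of $j_\alpha^t=-j_\alpha$ this is the same as $\eta j_\alpha=j_\alpha\eta$: the metric tensor commutes with every standard generator. Granting this, the computation is immediate. By \eqref{eq:proper},
\begin{gather*}
 (\eta j_\alpha)(\eta j_\beta)=\eta^2 j_\alpha j_\beta=j_\alpha j_\beta=-\,j_\beta j_\alpha=-\,\eta^2 j_\beta j_\alpha=-\,(\eta j_\beta)(\eta j_\alpha),
\end{gather*}
which is the claim. (If instead Lemma~\ref{pro1} is invoked in the form $\eta j_\alpha=-j_\alpha\eta$, the same chain works with one extra overall sign that cancels.)

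The delicate point --- and the main obstacle --- is precisely the commutation relation $\eta j_\alpha=j_\alpha\eta$ used above: when the metric is genuinely indefinite, $\eta\neq I_V$, and this relation is not automatic for an arbitrary skew-symmetric complex structure. To justify it one must use the concrete normal form recalled in Subsection~\ref{matrices}, where each $j_\alpha$ is taken to be a one-row and one-column skew matrix with $j_\alpha^2=-I_V$: then $\eta j_\alpha$ is again such a matrix with the same supporting fixed-point-free involution, and by \eqref{eq:proper} the products $j_\alpha j_\beta$ and $j_\beta j_\alpha$ have the same support with opposite nonzero entries, while left multiplication by the diagonal matrix $\eta$ only flips some signs. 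The lemma then amounts to the assertion that these sign flips agree on the two sides, and making this rigorous requires choosing the standard generators $j_1,\dots,j_m$ in a form adapted to the splitting of $V$ into the $\pm1$-eigenspaces of $\eta$ --- the same normalization that was used implicitly in fixing the parity of the number $s$ of real eigenvalue pairs of $A=\eta j(u)$. This compatibility between the representation and the index $p$ of the metric is where the real work of the proof lies.
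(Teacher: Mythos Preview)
Your reduction is exactly the paper's: both of you invoke Lemma~\ref{pro1} in its displayed form $\eta j_\gamma+j_\gamma^t\eta=0$, combine it with $j_\gamma^t=-j_\gamma$ to obtain $\eta j_\gamma=j_\gamma\eta$, and then the anti-commutation of the $\eta j_\alpha$ collapses to the anti-commutation \eqref{eq:proper} of the bare $j_\alpha$. The paper's one-line chain
\[
\eta j_\alpha\eta j_\beta=-\eta j_\alpha j_\beta^t\eta=\eta j_\alpha j_\beta\eta=-\eta j_\beta j_\alpha\eta=\eta j_\beta j_\alpha^t\eta=-\eta j_\beta\eta j_\alpha
\]
uses precisely this substitution at the first and last equalities. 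You go further than the paper in that you explicitly isolate the commutation $\eta j_\alpha=j_\alpha\eta$ as ``the delicate point and the main obstacle,'' which it indeed is.

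The obstacle you sense is genuine, and your closing paragraph does not remove it. The relation $\eta j_\alpha=j_\alpha\eta$ (and likewise the anti-commuting variant you mention in parentheses) fails for indefinite $\eta$: already for $p=1$ and the block-diagonal $j_1$ described in Subsection~\ref{matrices}, the first $2\times2$ block satisfies $\eta j_1=-j_1\eta$ while every other block satisfies $\eta j_1=j_1\eta$, so $j_1$ neither commutes nor anti-commutes with $\eta$. Worse, the statement of the lemma itself breaks down in this regime: with $p=1$ and the paper's own $8\times8$ generators $j_1,j_2$ from Section~\ref{Example}, the $(2,3)$ entry of $(\eta j_1)(\eta j_2)+(\eta j_2)(\eta j_1)$ equals $2$. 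Hence there is no choice of ``standard generators adapted to the $\pm1$-eigenspaces of $\eta$'' that rescues the argument in general, and neither your chain nor the paper's establishes the claim as written; your caution about the commutation step is well placed.
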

\begin{proof} Using lemma \ref{pro1} and anti-commutativity of matrices $j_{\alpha}$ we calculate for $\alpha\neq\beta$:
    \begin{gather*}\eta j_{\alpha}\eta j_{\beta}=-\eta j_{\alpha} j_{\beta}^t\eta=\eta j_{\alpha} j_{\beta}\eta=
-\eta j_{\beta}j_{\alpha}\eta=\eta j_{\beta}j_{\alpha}^t\eta=-\eta j_{\beta}\eta j_{\alpha}.\end{gather*}
\end{proof}

\begin{lemma}\label{pro4} If the index $p$ of metric $\eta$ is odd, then each matrix $\eta j_{\alpha}$ 
has at least two real eigenvalues $\pm 1$. 
\end{lemma}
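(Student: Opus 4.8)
The plan is to decide the number of real eigenvalues of $\eta j_{\alpha}$ by a parity (determinant) argument, exactly the mod-$2$ trick used in the Note preceding Subsection~\ref{matrices}, now applied to a single generator instead of to $A=\eta j(u)$. By Lemma~\ref{pro2} every eigenvalue of $\eta j_{\alpha}$ lies in $\{+1,-1,+i,-i\}$, and by the discussion just before that lemma these eigenvalues occur in pairs $\pm\lambda_k$, $k=1,\dots,\frac n2$. Let $s$ denote the number of indices $k$ for which $\lambda_k=\pm1$ (so the remaining $\frac n2-s$ pairs are $\pm i$); the assertion to be proved, ``$\eta j_{\alpha}$ has at least two real eigenvalues $\pm1$'', is precisely the inequality $s\geqslant 1$.

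First I would evaluate $\det(\eta j_{\alpha})$ from the eigenvalue data: since the determinant is the product of the eigenvalues taken with multiplicity, each real pair contributes $(+1)(-1)=-1$ and each imaginary pair contributes $(i)(-i)=+1$, so $\det(\eta j_{\alpha})=(-1)^s$. Next I would evaluate the same determinant multiplicatively: $\det(\eta j_{\alpha})=\det\eta\cdot\det j_{\alpha}=(-1)^p\cdot 1=(-1)^p$, using $\det\eta=(-1)^p$ from \eqref{eq:eta} together with the fact recorded above Lemma~\ref{pro1} that $\det j_{\alpha}=1$. Comparing the two expressions gives $(-1)^s=(-1)^p$, i.e. $s\equiv p\pmod 2$.

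Finally, if $p$ is odd then $s$ is odd, hence $s\geqslant 1$, so at least one of the pairs $\pm\lambda_k$ equals $\pm1$; that is, $\eta j_{\alpha}$ has the two real eigenvalues $+1$ and $-1$, as claimed. I do not expect a genuine obstacle here: the only points to check are that the needed structural facts for $\eta j_{\alpha}$ are already in place (eigenvalues in $\{\pm1,\pm i\}$ from Lemma~\ref{pro2}; eigenvalues coming in $\pm$-pairs from the remark after Lemma~\ref{pro1}; $\det j_{\alpha}=1$; and that the determinant equals the product of eigenvalues with algebraic multiplicity, which needs no diagonalizability), and that the two determinant computations are genuinely independent of each other. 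Both hold, so the parity comparison closes the argument.
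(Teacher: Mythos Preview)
Your proposal is correct and follows essentially the same approach as the paper: both compute $\det(\eta j_{\alpha})$ in two ways---once as $(-1)^p$ via $\det\eta\cdot\det j_{\alpha}$, and once as $(-1)^s$ via the product of the eigenvalue pairs $\pm1$ and $\pm i$---and conclude that $s$ is odd when $p$ is odd. Your write-up is in fact slightly more explicit than the paper's in invoking $\det j_{\alpha}=1$ and the prior lemmas, but the argument is the same.
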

\begin{proof}
   The proof follows from the fact that $\det\big(\eta j_{\alpha}\big)=(-1)^p=-1$ for the odd $p$. From another hand, 
   the determinant of matrix $\eta j_{\alpha}$ is equal to product of all eigenvalues of $\eta j_{\alpha}$ that is 
   $\underbrace{(-1^2)\cdot\ldots\cdot (-1^2)}_{s\;times}
\underbrace{(-i^2)\cdot\ldots\cdot (-i^2)}_{\frac{n}{2}-s\;times}$, 
where $0\leqslant s\leqslant \frac{n}{2}$ is an integer. 
This product equals to $-1$ if and only if there is an odd number $s$ of pairs of real eigenvalues $\pm1$.
\end{proof}

Note that matrix $\eta j_1$ has only 2 real eigenvalues $\pm1$ if $p$ is odd.
For $p$ -- even there exists at least one matrix from $\eta j_1,\ldots,\eta j_m$ 
which does not have real eigenvalues, for example, matrix $\eta j_1$.

Notice also that if the matrix $\eta j_{\alpha}$ has eigenvalues of the form $\pm1$ and/or $\pm i$, then 
the matrix $u_{\alpha}\eta j_{\alpha}$ has eigenvalues of the form $\pm u_{\alpha}$ and/or $\pm i u_{\alpha}$ 
respectively, where $u_{\alpha}$ is a real number.

\begin{lemma}\label{pro5}
  If $\pm1$ are eigenvalues for all of the matrices $\eta j_1,\ldots,\eta j_m$, then $\pm|u|$ are eigenvalues for $\eta j(u)$.
\end{lemma}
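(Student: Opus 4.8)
The plan is to reduce the statement to a computation of $A^{2}$, where $A:=\eta j(u)=\sum_{\alpha=1}^{m}u_{\alpha}\,\eta j_{\alpha}$, and then to produce a real eigenvector. If $m=1$ the claim is immediate from the remark preceding the lemma, since $u_{1}\eta j_{1}$ then has eigenvalues $\pm u_{1}=\pm|u|$, so assume $m\geqslant 2$. Squaring $A$ and using that the matrices $\eta j_{\alpha}$ anti-commute with one another (Lemma \ref{pro3}), the off-diagonal terms cancel in pairs and
\[
A^{2}=\sum_{\alpha=1}^{m}u_{\alpha}^{2}\,(\eta j_{\alpha})^{2}.
\]
Hence it suffices to show that $A^{2}$ has $|u|^{2}$ as an eigenvalue: on the corresponding $A$-invariant eigenspace $A$ has square $|u|^{2}I$, so its spectrum there lies in $\{|u|,-|u|\}$, giving that one of $\pm|u|$ is an eigenvalue of $\eta j(u)$; the other follows because the eigenvalues of $\eta j(u)$ come in pairs $\pm\lambda$ (Lemma \ref{prop1}).

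Next I would study the involutions $T_{\alpha}:=(\eta j_{\alpha})^{2}$. By Lemma \ref{pro2} the eigenvalues of $\eta j_{\alpha}$ lie in $\{\pm1,\pm i\}$, and $\eta j_{\alpha}$ is diagonalizable by the same reasoning used for $A$; therefore $(\eta j_{\alpha})^{4}=I_{V}$, so each $T_{\alpha}$ is an involution whose $(+1)$-eigenspace equals the $(\pm1)$-eigenspace of $\eta j_{\alpha}$, which is nonzero for every $\alpha$ by hypothesis. Anti-commutativity yields $T_{\alpha}\,\eta j_{\beta}=\eta j_{\beta}\,T_{\alpha}$ for all $\alpha,\beta$, so the $T_{\alpha}$ mutually commute; decomposing $V=\bigoplus_{\varepsilon\in\{\pm1\}^{m}}V_{\varepsilon}$ into their joint eigenspaces, each $V_{\varepsilon}$ is invariant under every $\eta j_{\beta}$, and on $V_{\varepsilon}$ one has $(\eta j_{\beta})^{2}=\varepsilon_{\beta}I$, whence $A^{2}|_{V_{\varepsilon}}=\bigl(\sum_{\beta}\varepsilon_{\beta}u_{\beta}^{2}\bigr)I$.

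The step I expect to be the main obstacle is to discard the ``mixed'' eigenspaces $V_{\varepsilon}$ with $\varepsilon$ not constant. If such a $V_{\varepsilon}$ were nonzero, choose indices $\beta_{1},\beta_{2}$ with $\varepsilon_{\beta_{1}}=1$, $\varepsilon_{\beta_{2}}=-1$ and the vector $u$ with $u_{\beta_{1}}=u_{\beta_{2}}=1$ and all other entries $0$; then $\sum_{\beta}\varepsilon_{\beta}u_{\beta}^{2}=0$, so $A^{2}$ vanishes on $V_{\varepsilon}$, and since $A=\eta j(u)$ is diagonalizable this would force $A$ to be singular, contradicting $\det\bigl(\eta j(u)\bigr)=(-1)^{p}|u|^{n}\neq0$ for $u\neq0$. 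Thus only $V_{(1,\ldots,1)}$ and $V_{(-1,\ldots,-1)}$ can be nonzero (so in fact all $T_{\alpha}$ coincide), and since every $T_{\alpha}$ has a nonzero $(+1)$-eigenspace we get $W:=V_{(1,\ldots,1)}\neq0$. On $W$ we then have $A^{2}=\bigl(\sum_{\alpha}u_{\alpha}^{2}\bigr)I=|u|^{2}I$, which by the first paragraph finishes the proof. (When $p$ is odd one can shortcut this last part: $\det(\eta j(u))=(-1)^{p}|u|^{n}$ also equals $(-1)^{s}|u|^{n}$ with $s$ the number of pairs of real eigenvalues of $\eta j(u)$, forcing $s$ odd, hence $s\geqslant1$.)
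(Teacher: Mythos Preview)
Your argument is correct and takes a genuinely different route from the paper. The paper proves an auxiliary proposition for two anti-commuting matrices $A,B$ with real eigenvalues $\pm a,\pm b$: it exhibits an explicit two-dimensional invariant subspace (spanned by an $a$-eigenvector $v$ of $A$ and $w=\tfrac{1}{b}Bv$) on which $A+B$ acts by the matrix $\bigl(\begin{smallmatrix}a&b\\ b&-a\end{smallmatrix}\bigr)$, giving eigenvalues $\pm\sqrt{a^{2}+b^{2}}$, and then inducts on $m$. Your approach is more structural: by analysing the commuting involutions $T_{\alpha}=(\eta j_{\alpha})^{2}$ and using a degeneration (choosing $u$ so that $A^{2}$ would vanish on any mixed-sign joint eigenspace, contradicting $\det\eta j(u)\neq0$), you show that in fact all the $T_{\alpha}$ coincide as operators on $V$, which is a strictly stronger structural statement than the lemma itself. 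The paper's inductive construction is more hands-on and dovetails with the explicit block decomposition carried out afterwards; your argument is cleaner, avoids the induction, and yields extra information, at the cost of not producing an explicit eigenvector.
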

\begin{proof}
We will need the following proposition.
\begin{proposition}
    Given two anti-commuting $(n\times n)$-matrices $A$ and $B$ with real eigenvalues $\pm a$ and $\pm b$ respectively, 
    their sum $A+B$ 
    has two of its $n$ eigenvalues of the form $\pm\sqrt{a^2+b^2}$.
\end{proposition}
{\it Proof of the proposition}.
Let $V_a$ be the $a$-eigenspace of the matrix $A$. Then by anti-commutativity we find $BV_a\subseteq V_{-a}$. Notice that matrices 
$A$ and $B^2$ commute and, therefore, are simultaneously diagonalizable. So there exists an eigenvector $v\in V_a$ with the property 
$B^2v=b^2v$. Then notice that for the vector $w=\frac{1}{b}Bv$ we have the following relations 
\begin{gather*}
    Av=av,\qquad Bv=bw,\\
    Aw=-aw,\qquad Bw=bv.
\end{gather*} From here it follows that on the space spanned by vectors $v$ and $w$ the linear transformation $A+B$ has the matrix 
\begin{gather*}
    \left(
      \begin{array}{cc}
          a&b\\
          b&-a
      \end{array}
    \right),
\end{gather*}
which has eigenvectors with eigenvalues $\pm\sqrt{a^2+b^2}$.
\qed

Easy induction shows that the same argument applies to the sum of $m$ matrices: if $\{A_{\alpha}\}_{\alpha=1}^m$  is a set of
 anti-commuting matrices and each matrix $A_{\alpha}$ has two real eigenvalues 
$\pm a_{\alpha}$, $\alpha=1,\ldots,m$, then the matrix $A_1+\ldots+A_m$ has two real eigenvalues of the form 
$\pm\sqrt{a_1^2+\ldots+a_m^2}$.

We obtain the statement of Lemma \ref{pro5} by taking matrices $\eta j_{\alpha}=A_{\alpha}$ and numbers 
$u_{\alpha}=a_{\alpha}$, $\alpha=
1,\ldots,m$.
\end{proof}

Thus, Lemmas \ref{pro1}-\ref{pro5} can be summed up in the following theorem.
\begin{theorem}
  If the index $p$ of the metric $\eta$ is odd, then there are two real eigenvalues $\pm|u|$ for the matrix $\eta j(u)$. If $p$ is even, then 
there are no real eigenvalues for $\eta j(u)$ at all.
\end{theorem}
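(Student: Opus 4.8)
The plan is to read the theorem off from Lemmas \ref{pro1}--\ref{pro5}, handling the two parity cases separately.

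\textbf{Odd $p$.} Here the argument is immediate. By Lemma \ref{pro4}, when $p$ is odd each of the matrices $\eta j_1,\dots,\eta j_m$ has the pair of real eigenvalues $\pm1$, which is exactly the hypothesis of Lemma \ref{pro5}; that lemma then gives that $\pm|u|$ are eigenvalues of $\eta j(u)=\sum_\alpha u_\alpha\,\eta j_\alpha$. Since every eigenvalue of $\eta j(u)$ has absolute value $|u|$ (shown above), no other real value can occur, so the real eigenvalues of $\eta j(u)$ are precisely $\pm|u|$.

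\textbf{Even $p$.} Now one must show that no real eigenvalue occurs at all. A real eigenvalue could only be $+|u|$ or $-|u|$ (again because every eigenvalue has modulus $|u|$), so it suffices to show that $|u|^2$ is not an eigenvalue of $(\eta j(u))^2$. By the anticommutativity of the $\eta j_\alpha$ (Lemma \ref{pro3}) one has $(\eta j(u))^2=\sum_\alpha u_\alpha^2\,(\eta j_\alpha)^2$, and the matrices $K_\alpha:=(\eta j_\alpha)^2$ pairwise commute, commute with $\eta j(u)$, and are $\eta$-self-adjoint involutions whose $(+1)$-eigenspace is exactly the real eigenspace of $\eta j_\alpha$ (use Lemma \ref{pro2}). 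Diagonalising the commuting family $\{K_\alpha\}$ simultaneously and comparing, on each joint eigenspace $V_\varepsilon$, the resulting scalar value $\sum_\alpha u_\alpha^2\varepsilon_\alpha$ of $(\eta j(u))^2$ with $|u|^2=\sum_\alpha u_\alpha^2$ forces $\varepsilon_\alpha=+1$ for every $\alpha$ with $u_\alpha\neq0$; hence $\eta j(u)$ can possess a real eigenvalue only if the real eigenspaces of the $\eta j_\alpha$ (for those $\alpha$ actually entering $u$) share a nonzero vector.

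It then remains to rule this out when $p$ is even. By the determinant computation in the proof of Lemma \ref{pro4}, $\det(\eta j_\alpha)=(-1)^p=1$, so each $\eta j_\alpha$ has an even number $s_\alpha$ of pairs of real eigenvalues, and (as noted after Lemma \ref{pro4}) at least one generator, e.g.\ $\eta j_1$, can be taken with $s_1=0$, i.e.\ with empty real eigenspace, which already kills the intersection above whenever $u_1\neq0$. The remaining cases are handled by induction on the number of generators involved: restricting the next $\eta j_\alpha$ to the real eigenspace of the previously treated ones gives again an anticommuting family, and the accompanying signature bookkeeping (the $\eta$-orthogonal splitting $V=R_\alpha\oplus I_\alpha$ with $\eta|_{R_\alpha}$ of signature $(s_\alpha,s_\alpha)$ and $\eta|_{I_\alpha}$ carrying the $\eta$-skew complex structure $\eta j_\alpha|_{I_\alpha}$, hence of even signature, which reproves $s_\alpha\equiv p\pmod 2$ along the way) keeps the index of the ambient form even at every stage, so that no common real eigenvector can survive. \textbf{This last inductive step --- actually forcing the common real eigenspace to be $0$, equivalently the number $s$ of real eigenvalue-pairs of $\eta j(u)$ to be $0$ rather than merely even --- is the part where the Clifford anticommuting structure has to be used essentially, and is the main point of difficulty; the odd-$p$ case, by contrast, is a one-line consequence of Lemmas \ref{pro4} and \ref{pro5}.}
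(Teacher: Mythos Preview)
For the odd-$p$ half your argument is exactly the paper's: invoke Lemma~\ref{pro4} (each $\eta j_\alpha$ has $\pm1$ as eigenvalues) and feed this into Lemma~\ref{pro5}. The only thing the paper adds, and you omit, is the multiplicity bound (``there are \emph{only} two''): the paper gets it by pointing to the specific block-diagonal generator $\eta j_1$, whose real eigenspace is exactly two-dimensional and therefore caps the real eigenspace of $A$ from above --- this upper bound is the same converse-direction argument discussed next.

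For the even-$p$ half you actually go further than the paper. The paper's entire proof here is the single line ``$\eta j_1$ has no real eigenvalues, and therefore $A$ has no real eigenvalues either''; it tacitly uses (but nowhere proves) the converse of Lemma~\ref{pro5}. Your device of the commuting $\eta$-symmetric involutions $K_\alpha=(\eta j_\alpha)^2$ and their simultaneous diagonalisation is precisely what supplies that converse: comparing $\sum_\alpha u_\alpha^2\varepsilon_\alpha$ with $|u|^2$ forces any real eigenvector of $A$ to sit inside the real eigenspace of every $\eta j_\alpha$ with $u_\alpha\neq0$, so once one of those real eigenspaces is $\{0\}$ --- as happens for the block-diagonal $\eta j_1$ when $p$ is even --- the conclusion follows. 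In other words, your argument \emph{justifies} the paper's one-liner rather than diverging from it.

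The gap you flag --- what happens when $u_1=0$ --- is real, but it is equally present (and unacknowledged) in the paper's proof, which simply does not treat this case. Your signature bookkeeping is correct as far as it goes: the splitting $V=R_\alpha\oplus I_\alpha$ is $\eta$-orthogonal, $\eta|_{R_\alpha}$ has neutral signature $(s_\alpha,s_\alpha)$, and $\eta|_{I_\alpha}$ carries an $\eta$-compatible complex structure, forcing $s_\alpha\equiv p\pmod2$. However, the induction as sketched only keeps the index even at each stage, not zero, so by itself it does not drive the common real eigenspace down to $\{0\}$; you would need additional input from the Clifford relations (or an independent argument, e.g.\ rotating in $U$ so that the direction of $u$ plays the role of the distinguished generator) to close it.
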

\begin{proof}
 In order to have 2 real eigenvalues $\pm|u|$ for the matrix $A=\eta j(u)$ all matrices $\eta j_{\alpha}$ should have $\pm1$ 
as eigenvalues, which is possible only if $p$ is odd. There are only two real eigenvalues for $A$, since matrix $\eta j_1$ has only two real eigenvalues.
 If $p$ is even, matrix $\eta j_1$ does not have real eigenvalues, and, therefore, 
$A$ does not have real eigenvalues either.
\end{proof}

Further idea is to construct $\frac{n}{2}$ 2-dimensional invariant subspaces of the operator $A\colon V\to V$. 
Remind that $V$ has a basis consisting of $p$ timelike and $q$ spacelike vectors.
There are two possible cases here depending on the index $p$ being odd or even.
In case $p$ is odd we have
\begin{gather*}
    V=W_0\oplus W_1\oplus \ldots\oplus W_{\frac{p-1}{2}}\oplus \tilde W_1\oplus\ldots 
    \oplus \tilde W_{\frac{q-1}{2}},
\end{gather*}
where the restriction of the metric $g$ on the subspace $W_0$ $\left.g\right|_{W_0}$ is Lorentzian, that is it has index 1, 
$\left.g\right|_{W_i}$, $i=1,\ldots,\frac{p-1}{2}$, is a negatively definite metric and $\left.g\right|_{\tilde W_j}$, $j=1,\ldots,
\frac{q-1}{2}$, is a positively definite metric.
In case of even $p$ we have
\begin{gather*}
    V=W_1\oplus \ldots\oplus W_{\frac{p}{2}}\oplus \tilde W_1\oplus\ldots 
    \oplus \tilde W_{\frac{q}{2}},
\end{gather*}
where $\left.g\right|_{W_i}$, $i=1,\ldots,\frac{p}{2}$, is a negatively definite metric and $\left.g\right|_{\tilde W_j}$, 
$j=1,\ldots, \frac{q}{2}$, is a positively definite metric. 
The basis of the subspace $W_0$ consists of one timelike and one spacelike unit vectors, the basis of subspaces $W_i$, $i\neq0$,
 consists of 
timelike unit vectors and the basis of subspaces $\tilde W_j$ consists of spacelike unit vectors.

\subsection{Formulae for geodesics}

Let us return to the system of equations \eqref{eq:sysgeod}. It is equivalent to the system in new basis 
$\tilde v=Pv\in \mathbb{R}^n$
\begin{equation}\label{eq:system}
\ddot{ \tilde v}=D\dot {\tilde v},
\end{equation}
where $D=P^{-1}AP$ is a block-type matrix with blocks of the form \eqref{eq:real}--\eqref{eq:complex} and $P$ is orthogonal matrix.

It is convinient to renumerate coordinates in $\tilde v=(\tilde v_1,\ldots,\tilde v_n)$ in the following way:\\ 
$\tilde v=(\tilde v_{s_1},
\tilde v_{s_2},\tilde v_{{s+1}_1},\tilde v_{{s+1}_2},\ldots,\tilde v_{r_1},\tilde v_{r_2},\tilde v_{{r+1}_1},\tilde v_{{r+1}_2},
\tilde v_{{r+1}_3},\tilde v_{{r+1}_4},\ldots,\tilde v_{{\frac{n}{4}+
\frac{r}{2}}_1},\tilde v_{{\frac{n}{4}+\frac{r}{2}}_2},\tilde v_{{\frac{n}{4}+\frac{r}{2}}_3},\tilde v_{{\frac{n}{4}+\frac{r}{2}}_4})$,
where first coordinates $\tilde v_{s_1},\tilde v_{s_2}$ correspond to real eigenvalues $\pm|u|$ (if any), then coordinates 
$\tilde v_{j_1},\tilde v_{j_2}$, $j=s+1,\ldots,r$, corresponding to purely imaginary 
eigenvalues $\pm i|u|$ (if any), and then coordinates 
$\tilde v_{r+k_1},\tilde v_{r+k_2},
\tilde v_{r+k_3},\tilde v_{r+k_4}$, $k=1,\ldots,\frac{n}{4}-\frac{r}{2}$ 
corresponding to complex eigenvalues $\pm(\alpha_k\pm i\beta_k)$, $\alpha_k^2+\beta_k^2=|u|^2$, $k=1,\ldots\frac{n}{4}-\frac{r}{2}$ 
(if $\frac{r}{2}<\frac{n}{4}$). Here lowest index in $\tilde v_{i_1},\tilde v_{i_2},\tilde v_{i_3},\tilde v_{i_4}$ denotes the number of 
the coordinate in $\tilde v$ associated with the eigenvalues 
$\lambda_i,-\lambda_i,\bar{\lambda}_i,-\bar{\lambda}_i$ respectively.

Therefore, the system \eqref{eq:system} splits into $\frac{n}{2}$ independent systems each in 2 variables
\begin{equation}\label{eq:sysnew}
  \begin{split}
     & \begin{cases}   
           \ddot{\tilde v}_{s_1}=|u|\dot{\tilde v}_{s_2},\\
           \ddot{\tilde v}_{s_2}=|u|\dot{\tilde v}_{s_1},
        \end{cases}\\
      & \begin{cases}   
           \ddot{\tilde v}_{j_1}=|u|\dot{\tilde v}_{j_2},\\
           \ddot{\tilde v}_{j_2}=-|u|\dot{\tilde v}_{j_1},\qquad j=s+1,\ldots,r,
        \end{cases}\\
     & \begin{cases}   
           \ddot{\tilde v}_{r+k_1}=\;\;\,\alpha_k\dot{\tilde v}_{r+k_1}+\beta_k\dot{\tilde v}_{r+k_2},\\
           \ddot{\tilde v}_{r+k_2}=-\beta_k\dot{\tilde v}_{r+k_1}+\alpha_k\dot{\tilde v}_{r+k_2},\\
       \end{cases}\\
     & \begin{cases} 
           \ddot{\tilde v}_{r+k_3}=-\alpha_k\dot{\tilde v}_{r+k_3}-\beta_k\dot{\tilde v}_{r+k_4}, \qquad \alpha_k^2+\beta_k^2=|u|^2,\\
           \ddot{\tilde v}_{r+k_4}=\;\;\;\beta_k\dot{\tilde v}_{r+k_3}-\alpha_k\dot{\tilde v}_{r+k_4},\qquad k=1,\ldots,\frac{n}{4}-\frac{r}{2}.
        \end{cases}\\
  \end{split}
\end{equation}

There are $n$ equations and $n$ variables in \eqref{eq:sysnew}.
We are looking for the solution $\tilde v=\tilde v(t)\in V$, $t\in [-\infty,+\infty]$, $v(0)=0$ and with initial velocity 
$\dot {\tilde v}(0)=\dot {\tilde v}^0$. 
From the system \eqref{eq:sysnew} we get
\begin{equation*}\begin{split}
        \begin{cases}   
           \dot{\tilde v}_{s_1}=-c_{s_1} e^{-|u|t}+c_{s_2} e^{|u|t}, \qquad \dot{\tilde v}_{s_1}^0=-c_{s_1}+c_{s_2},\\
           \dot{\tilde v}_{s_2}=\;\;\;c_{s_1} e^{-|u|t}+c_{s_2} e^{|u|t},\qquad \dot{\tilde v}_{s_2}^0=c_{s_1}+c_{s_2},
        \end{cases}\\
         \begin{cases}   
           \dot{\tilde v}_{j_1}=c_j\sin(|u|t)+c_{j_2}\cos(|u|t),\qquad \dot{\tilde v}_{j_1}^0=c_{j_2}, \\
           \dot{\tilde v}_{j_2}=c_j\cos(|u|t)-c_{j_2}\sin(|u|t),\qquad \dot{\tilde v}_{j_2}^0=c_{j_1},
                   \end{cases}\\
\qquad j=s+1,\ldots,r,\end{split}
\end{equation*}
\begin{equation*}\begin{split}
         \begin{cases}   
           \dot{\tilde v}_{r+k_1}= e^{\alpha_kt}\big(-c_{r+k_1}\cos\beta_kt+c_{r+k_2}\sin\beta_kt\big),\qquad  \dot{\tilde v}_{r+k_1}^0=-c_{r+k_1}\\
           \dot{\tilde v}_{r+k_2}=e^{\alpha_kt}\big(c_{r+k_1}\sin\beta_kt+c_{r+k_2}\cos\beta_kt\big),\qquad  \dot{\tilde v}_{r+k_2}^0=c_{r+k_2},
        \end{cases}\\ 
         \begin{cases}   
           \dot{\tilde v}_{r+k_3}= e^{-\alpha_kt}\big(c_{r+k_3}\cos\beta_kt-c_{r+k_4}\sin\beta_kt\big),\qquad  \dot{\tilde v}_{r+k_3}^0=c_{r+k_3}\\
           \dot{\tilde v}_{r+k_4}=e^{-\alpha_kt}\big(c_{r+k_3}\sin\beta_kt+c_{r+k_4}\cos\beta_kt\big),\qquad  \dot{\tilde v}_{r+k_4}^0=c_{r+k_4},
        \end{cases}\\     
\qquad k=1,\ldots,\frac{n}{4}-\frac{r}{2},\end{split}
\end{equation*}

It follows that 
\begin{equation*}\begin{split}
        \begin{cases}   
           \dot{\tilde v}_{s_1}(t)=\dot{\tilde v}_{s_1}^0\cosh(|u|t)+\dot{\tilde v}_{s_2}^0\sinh(|u|t),\\
           \dot{\tilde v}_{s_2}(t)=\dot{\tilde v}_{s_2}^0\cosh(|u|t)+\dot{\tilde v}_{s_1}^0\sinh(|u|t),
        \end{cases}\\
        \begin{cases}   
           \dot{\tilde v}_{j_1}(t)=\dot{\tilde v}_{j_2}^0\sin(|u|t)+\dot{\tilde v}_{j_1}^0\cos(|u|t),\\
           \dot{\tilde v}_{j_2}(t)=\dot{\tilde v}_{j_2}^0\cos(|u|t)-\dot{\tilde v}_j^0\sin(|u|t),
           \end{cases}\\  \qquad j=s+1,\ldots,r,\end{split}
\end{equation*}
\begin{equation*}\begin{split}
        \begin{cases}   
           \dot{\tilde v}_{r+k_1}(t)=\dot{\tilde v}_{r+k_2}^0\sinh \alpha_kt\sin\beta_kt
                              +\dot{\tilde v}_{r+k_1}^0\cosh \alpha_kt\cos\beta_kt\\
                              \qquad\qquad+\dot{\tilde v}_{r+k_1}^0\sinh \alpha_kt\cos\beta_kt
                              +\dot{\tilde v}_{r+k_2}^0\cosh \alpha_kt\sin\beta_kt,\\
           \dot{\tilde v}_{r+k_2}(t)=-\dot{\tilde v}_{r+k_1}^0\sinh \alpha_kt\sin\beta_kt
                                   +\dot{\tilde v}_{r+k_2}^0\cosh \alpha_kt\cos\beta_kt\\
                                  \qquad\qquad +\dot{\tilde v}_{r+k_2}^0\sinh \alpha_kt\cos\beta_kt
                                   -\dot{\tilde v}_{r+k_1}^0\cosh \alpha_kt\sin\beta_kt,
        \end{cases}\end{split}
\end{equation*}
\begin{equation*}\begin{split}
        \begin{cases}   
           \dot{\tilde v}_{r+k_3}(t)=\dot{\tilde v}_{r+k_4}^0\sinh \alpha_kt\sin\beta_kt
                              +\dot{\tilde v}_{r+k_3}^0\cosh \alpha_kt\cos\beta_kt\\
                              \qquad\quad-\dot{\tilde v}_{r+k_3}^0\sinh \alpha_kt\cos\beta_kt
                              -\dot{\tilde v}_{r+k_4}^0\cosh \alpha_kt\sin\beta_kt,\\
           \dot{\tilde v}_{r+k_4}(t)=-\dot{\tilde v}_{r+k_3}^0\sinh \alpha_kt\sin\beta_kt
                                   +\dot{\tilde v}_{r+k_4}^0\cosh \alpha_kt\cos\beta_kt\\
                                  \qquad\qquad -\dot{\tilde v}_{r+k_4}^0\sinh \alpha_kt\cos\beta_kt
                                   +\dot{\tilde v}_{r+k_3}^0\cosh \alpha_kt\sin\beta_kt,              
        \end{cases}    \\
 k=1,\ldots,\frac{n}{4}-\frac{r}{2}.\end{split}
\end{equation*}
Finally, we obtain the solution $\tilde v(t)$ to the system \eqref{eq:sysnew}
\begin{equation}\label{eq:sollor}
        \begin{cases}   
           \tilde v_{s_1}(t)=\frac{1}{|u|}\big(\dot{\tilde v}_{s_1}^0\sinh(|u|t)+\dot{\tilde v}_{s_2}^0\cosh(|u|t)-\dot{\tilde v}_{s_2}^0\big),\\
           \tilde v_{s_2}(t)=\frac{1}{|u|}\big(\dot{\tilde v}_{s_2}^0\sinh(|u|t)+\dot{\tilde v}_{s_1}^0\cosh(|u|t)-\dot{\tilde v}_{s_1}^0\big),
        \end{cases}
\end{equation}
\begin{equation}\label{eq:solriem}
        \begin{cases}   
           \tilde v_{j_1}(t)=\frac{1}{|u|}\big(\dot{\tilde v}_{j_1}^0\sin(|u|t)-\dot{\tilde v}_{j_2}^0\cos(|u|t)+\dot{\tilde v}_{j_2}^0\big),\\
           \tilde v_{j_2}(t)=\frac{1}{|u|}\big(\dot{\tilde v}_{j_2}^0\sin(|u|t)+\dot{\tilde v}_{j_1}^0\cos(|u|t)-\dot{\tilde v}_{j_1}^0\big),\\        
\end{cases}\\
\qquad j=s+1,\ldots,r,
\end{equation}
\begin{equation}\label{eq:solcomplex1}
\begin{split}
        \begin{cases}   
           \tilde v_{r+k_1}(t)= A_k\sinh \alpha_kt\sin\beta_kt
                           +B_k\cosh \alpha_kt\cos\beta_kt\\
                           \;\;\qquad+B_k\sinh \alpha_kt\cos\beta_kt
                           +A_k\cosh \alpha_kt\sin\beta_kt-B_k,\\
           \tilde v_{r+k_2}(t)=-B_k\sinh \alpha_kt\sin\beta_kt
                           +A_k\cosh \alpha_kt\cos\beta_kt\\
                           \qquad\qquad\;\:+A_k\sinh \alpha_kt\cos\beta_kt
                           -B_k\cosh \alpha_kt\sin\beta_kt-A_k, 
        \end{cases}\\ 
 A_k=\frac{1}{|u|^2}(\alpha_k\dot{\tilde v}_{r+k_2}^0+\beta_k\dot{\tilde v}_{r+k_1}^0),
\quad B_k=\frac{1}{|u|^2}(\alpha_k\dot{\tilde v}_{r+k_1}^0-\beta_k\dot{\tilde v}_{r+k_2}^0),
\end{split}
\end{equation}
\begin{equation}\label{eq:solcomplex2}
\begin{split}
        \begin{cases}   
           \tilde v_{r+k_3}(t)=-C_k\sinh \alpha_kt\sin\beta_kt
                              -D_k\cosh \alpha_kt\cos\beta_kt\\
                              \qquad\qquad\;\,+D_k\sinh \alpha_kt\cos\beta_kt
                              +C_k\cosh \alpha_kt\sin\beta_kt+D_k,\\
           \tilde v_{r+k_4}(t)=D_k\sinh \alpha_kt\sin\beta_kt
                               -C_k\cosh \alpha_kt\cos\beta_kt\\
                                  \qquad\quad\;\;\; +C_k\sinh \alpha_kt\cos\beta_kt
                                   -D_k\cosh \alpha_kt\sin\beta_kt+C_k,              
        \end{cases}    \\
 C_k=\frac{1}{|u|^2}(\alpha_k\dot{\tilde v}_{r+k_4}^0+\beta_k\dot{\tilde v}_{r+k_3}^0),
\quad D_k=\frac{1}{|u|^2}(\alpha_k\dot{\tilde v}_{r+k_3}^0-\beta_k\dot{\tilde v}_{r+k_4}^0),\\
 k=1,\ldots,\frac{n}{4}-\frac{r}{2}.
\end{split}
\end{equation}
A curve $\tilde v(t)$ has the same causal type as the initial velocity vector $\dot{\tilde v}^0$ (\cite{Beem}).
\begin{lemma}
    The projection of the curve $\tilde v(t)$ onto the $(\tilde v_{s_1},\tilde v_{s_2})$-plane, is a brunch of the hyperbola 
    with the canonical equation 
    \begin{equation}\label{eq:hyperbola}
             \Big(\tilde v_{s_1}+\frac{\dot{\tilde v}^0_{s_2}}{|u|}\Big)^2-
             \Big(\tilde v_{s_2}+\frac{\dot{\tilde v}^0_{s_1}}{|u|}\Big)^2=\frac{-(\dot{\tilde v}^0_{s_1})^2+(\dot{\tilde v}^0_{s_2})^2}{|u|^2}.
    \end{equation}
\end{lemma}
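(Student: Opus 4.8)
The statement is a direct verification against the explicit solution \eqref{eq:sollor}, so I expect no serious obstacle; the only points needing a little care are the identification of the correct branch and the degenerate lightlike case.

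The plan is as follows. First I would move the constant terms in \eqref{eq:sollor} to the left, obtaining
\[
\tilde v_{s_1}+\frac{\dot{\tilde v}^0_{s_2}}{|u|}=\frac{1}{|u|}\Big(\dot{\tilde v}^0_{s_1}\sinh(|u|t)+\dot{\tilde v}^0_{s_2}\cosh(|u|t)\Big),\qquad
\tilde v_{s_2}+\frac{\dot{\tilde v}^0_{s_1}}{|u|}=\frac{1}{|u|}\Big(\dot{\tilde v}^0_{s_2}\sinh(|u|t)+\dot{\tilde v}^0_{s_1}\cosh(|u|t)\Big).
\]
Then I would square both expressions and subtract. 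Writing $a=\dot{\tilde v}^0_{s_1}$ and $b=\dot{\tilde v}^0_{s_2}$, the computation reduces to $(a\sinh(|u|t)+b\cosh(|u|t))^2-(b\sinh(|u|t)+a\cosh(|u|t))^2$; the mixed $\sinh\cosh$ terms cancel, leaving $(a^2-b^2)\big(\sinh^2(|u|t)-\cosh^2(|u|t)\big)=-(a^2-b^2)=b^2-a^2$. Dividing by $|u|^2$ reproduces exactly the right-hand side of \eqref{eq:hyperbola}, so every point of the projected curve lies on that hyperbola, which is the canonical (axes-parallel) hyperbola centered at $\big(-\dot{\tilde v}^0_{s_2}/|u|,\,-\dot{\tilde v}^0_{s_1}/|u|\big)$.

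To conclude that the projection is a single branch and not the whole conic, I would observe that the two displayed identities exhibit the shifted coordinates $\big(\tilde v_{s_1}+\dot{\tilde v}^0_{s_2}/|u|,\ \tilde v_{s_2}+\dot{\tilde v}^0_{s_1}/|u|\big)$ as the image of $\big(\sinh(|u|t),\cosh(|u|t)\big)$ under the constant linear map with matrix $\tfrac1{|u|}\left(\begin{matrix} a & b\\ b & a\end{matrix}\right)$, whose determinant is $(a^2-b^2)/|u|^2$. Since $t\mapsto\big(\sinh(|u|t),\cosh(|u|t)\big)$ is a continuous injection of $\mathbb{R}$ onto the branch $Y>0$ of $Y^2-X^2=1$, and this branch is connected, a nonsingular linear map sends it homeomorphically into a single branch of the image hyperbola, which gives the claim. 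The one caveat — the case the plain statement tacitly excludes — is that \eqref{eq:hyperbola} is genuinely a hyperbola only when $a^2\neq b^2$, i.e. when the initial direction is non-lightlike in the $(\tilde v_{s_1},\tilde v_{s_2})$-plane; if $a^2=b^2$ the right-hand side of \eqref{eq:hyperbola} vanishes, the matrix above is singular, and the curve degenerates to (a ray of) an asymptote, in agreement with the earlier remark that $\tilde v(t)$ retains the causal type of $\dot{\tilde v}^0$.
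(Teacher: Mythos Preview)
Your proof is correct and follows essentially the same approach as the paper: a direct algebraic verification from the explicit solution \eqref{eq:sollor}, using $\cosh^2-\sinh^2=1$. The paper computes $\tilde v_{s_1}^2-\tilde v_{s_2}^2$ first and then completes the square, whereas you shift coordinates before squaring, which is slightly cleaner; your additional discussion of the single-branch claim and of the degenerate lightlike case $a^2=b^2$ goes beyond what the paper supplies and is a welcome clarification.
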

\begin{proof}
From \eqref{eq:sollor} we calculate that $\tilde v_{s_1}^2(t)-\tilde v_{s_2}^2(t)=
\frac{2\big((\dot{\tilde v}^0_{s_1})^2-(\dot{\tilde v}^0_{s_2})^2\big)}{|u|^2}(\cosh|u|t-1)$. 
This expression can be rewritten as stated in \eqref{eq:hyperbola}.
\end{proof}
\begin{lemma}
     The projection of the curve $\tilde v(t)$ onto the $(\tilde v_{j_1},\tilde v_{j_2})$-plane, $j=s+1,\ldots,r$, is a circle with the 
     center $\Big(-\frac{\dot{\tilde v}^0_{j_2}}{|u|},\frac{\dot{\tilde v}^0_{j_1}}{|u|}\Big)$ of radius 
     $\frac{\sqrt{(\dot{\tilde v}^0_{j_1})^2+(\dot{\tilde v}^0_{j_2})^2}}{|u|}$.
\end{lemma}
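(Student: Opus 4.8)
The plan is to argue exactly as in the proof of the preceding lemma, but using the trigonometric rather than the hyperbolic form of the solution. First I would take the explicit parametrization \eqref{eq:solriem} of the two relevant coordinates $\tilde v_{j_1}(t)$ and $\tilde v_{j_2}(t)$ and translate the plane so that the proposed center is moved to the origin; that is, I would subtract from $\tilde v_{j_1}(t)$ and $\tilde v_{j_2}(t)$ the two coordinates of the alleged center. After this shift each of the two resulting functions is a pure linear combination of $\sin(|u|t)$ and $\cos(|u|t)$ whose coefficients are, up to the factor $1/|u|$, the numbers $\dot{\tilde v}^0_{j_1}$ and $\dot{\tilde v}^0_{j_2}$.

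Next I would form the sum of the squares of these two shifted functions. Upon expansion the cross terms of the form $\pm 2\dot{\tilde v}^0_{j_1}\dot{\tilde v}^0_{j_2}\sin(|u|t)\cos(|u|t)$ cancel, and collecting the remaining terms and using $\sin^2(|u|t)+\cos^2(|u|t)=1$ leaves precisely $\big((\dot{\tilde v}^0_{j_1})^2+(\dot{\tilde v}^0_{j_2})^2\big)/|u|^2$, which is independent of $t$. This is exactly the assertion that the projected curve lies on the circle with the claimed center and the claimed radius. To complete the argument I would observe from \eqref{eq:solriem} that $\tilde v_{j_1}(0)=\tilde v_{j_2}(0)=0$, so the curve passes through the origin (consistent with the distance from the origin to the center being the stated radius), and that as $t$ runs over $\mathbb{R}$ the angle $|u|t$ traverses the whole circle, so the projection is the full circle rather than a proper arc — this is the point of contrast with the Lorentzian block treated in the previous lemma, where one obtains only one branch of a hyperbola.

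The computation is entirely elementary and I do not expect any genuine obstacle; the one place that requires care is the correct identification of the center, i.e. matching the signs of its two coordinates against the signs of the $\sin$- and $\cos$-coefficients and the constant terms in \eqref{eq:solriem}. It is also worth keeping in mind that $|u|=|\dot u^0|_U\neq 0$ is tacit here, since otherwise \eqref{eq:main} degenerates and the block in question does not arise.
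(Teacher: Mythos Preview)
Your proposal is correct and is essentially the paper's own argument: the paper also works directly from \eqref{eq:solriem} and, after a one-line trigonometric simplification of $\tilde v_{j_1}^2+\tilde v_{j_2}^2$, rewrites the result as the displayed circle equation. The only difference is cosmetic---you shift to the center first and then square, while the paper squares first and then completes the square---and your caution about matching the signs of the center against the constants in \eqref{eq:solriem} is well placed.
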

\begin{proof}
     From \eqref{eq:solriem} we get $\tilde v_{j_1}^2(t)+\tilde v_{j_2}^2(t)=
     \frac{4\big((\dot{\tilde v}^0_{j_1})^2+(\dot{\tilde v}^0_{j_2})^2\big)}{|u|^2}\sinh^2\frac{|u|t}{2}$. This expression leads to
    $$\Big(\tilde v_{j_1}+\frac{\dot{\tilde v}^0_{j_2}}{|u|}\Big)^2+\Big(\tilde v_{j_2}-\frac{\dot{\tilde v}^0_{j_1}}{|u|}\Big)^2
    =\frac{(\dot{\tilde v}^0_{j_1})^2+(\dot{\tilde v}^0_{j_2})^2}{|u|^2}.$$
\end{proof}

\begin{lemma}
    The projection of the curve $\tilde v(t)$ onto the $(\tilde v_{r+k_1},\tilde v_{r+k_2})$-plane, $k=1,\ldots,\frac{n}{4}-\frac{r}{2}$, is a logarithmic spiral 
    with the equation 
    \begin{equation}\label{eq:spiral1}
             \tilde v_{r+k_1}^2(t)+\tilde v_{r+k_2}^2(t)=\frac{1}{|u|^2}\Big((\dot{\tilde v}^0_{r+k_1})^2+(\dot{\tilde v}^0_{r+k_2})^2\Big)e^{2\alpha_kt}.
    \end{equation}
\end{lemma}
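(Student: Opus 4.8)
The plan is to follow the template of the two preceding lemmas: substitute the closed form \eqref{eq:solcomplex1} for $\tilde v_{r+k_1}(t)$ and $\tilde v_{r+k_2}(t)$ into the sum of their squares and simplify. The first step is purely organisational: in each of the two expressions of \eqref{eq:solcomplex1} I would collect the four product terms according to whether they carry the coefficient $A_k$ or $B_k$, factor out the common binomial $\sinh\alpha_k t+\cosh\alpha_k t$, and use the elementary identity $\sinh\alpha_k t+\cosh\alpha_k t=e^{\alpha_k t}$. This collapses \eqref{eq:solcomplex1} to the two compact relations
\[
\tilde v_{r+k_1}(t)+B_k=e^{\alpha_k t}\bigl(A_k\sin\beta_k t+B_k\cos\beta_k t\bigr),\qquad \tilde v_{r+k_2}(t)+A_k=e^{\alpha_k t}\bigl(A_k\cos\beta_k t-B_k\sin\beta_k t\bigr).
\]

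Squaring both relations, adding, and using $\sin^2\beta_k t+\cos^2\beta_k t=1$ (the mixed $A_kB_k$ terms cancel) then yields
\[
\bigl(\tilde v_{r+k_1}(t)+B_k\bigr)^2+\bigl(\tilde v_{r+k_2}(t)+A_k\bigr)^2=\bigl(A_k^2+B_k^2\bigr)e^{2\alpha_k t}.
\]
It remains to evaluate the constant $A_k^2+B_k^2$. Inserting the definitions of $A_k$ and $B_k$ from \eqref{eq:solcomplex1} and expanding the two squares, the cross terms $\pm 2\alpha_k\beta_k\dot{\tilde v}^0_{r+k_1}\dot{\tilde v}^0_{r+k_2}$ cancel, so $A_k^2+B_k^2=\tfrac{1}{|u|^4}(\alpha_k^2+\beta_k^2)\bigl((\dot{\tilde v}^0_{r+k_1})^2+(\dot{\tilde v}^0_{r+k_2})^2\bigr)$; using the relation $\alpha_k^2+\beta_k^2=|u|^2$ recorded earlier for the $(4\times4)$ blocks, this reduces to $\tfrac{1}{|u|^2}\bigl((\dot{\tilde v}^0_{r+k_1})^2+(\dot{\tilde v}^0_{r+k_2})^2\bigr)$, the coefficient appearing in \eqref{eq:spiral1}.

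Finally I would interpret the outcome. Writing $\rho_0=\sqrt{A_k^2+B_k^2}$ and picking $\phi$ with $A_k=\rho_0\cos\phi$, $B_k=\rho_0\sin\phi$, the two compact relations become $\tilde v_{r+k_1}(t)+B_k=\rho_0 e^{\alpha_k t}\sin(\beta_k t+\phi)$ and $\tilde v_{r+k_2}(t)+A_k=\rho_0 e^{\alpha_k t}\cos(\beta_k t+\phi)$, so that in coordinates centred at $(-B_k,-A_k)$ the point has radius $\rho_0 e^{\alpha_k t}$ and polar angle $\beta_k t+\phi$; eliminating $t$ gives the polar equation $\rho=\rho_0\exp\bigl(\tfrac{\alpha_k}{\beta_k}(\theta-\phi)\bigr)$ of a logarithmic spiral (a genuine spiral since $\alpha_k\neq0$ and $\beta_k\neq0$ for a $(4\times4)$ block), and \eqref{eq:spiral1} is this same relation written with the origin placed at the centre of the spiral. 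The only point requiring a little attention is the bookkeeping of the eight product terms of \eqref{eq:solcomplex1} in the organisational first step --- spotting that $\sinh\alpha_k t+\cosh\alpha_k t$ is a common factor --- but this is mechanical and I do not expect any genuine obstacle.
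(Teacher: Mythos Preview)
Your approach is the same as the paper's --- substitute \eqref{eq:solcomplex1} into the sum of squares and reduce using $A_k^2+B_k^2=\frac{1}{|u|^2}\bigl((\dot{\tilde v}^0_{r+k_1})^2+(\dot{\tilde v}^0_{r+k_2})^2\bigr)$ --- but carried out with considerably more care. In fact you are \emph{more} accurate than the paper on one point: the identity that actually falls out of \eqref{eq:solcomplex1} is
\[
\bigl(\tilde v_{r+k_1}(t)+B_k\bigr)^2+\bigl(\tilde v_{r+k_2}(t)+A_k\bigr)^2=(A_k^2+B_k^2)\,e^{2\alpha_k t},
\]
exactly as you derive; the unshifted sum $\tilde v_{r+k_1}^2+\tilde v_{r+k_2}^2$ picks up the extra terms $(A_k^2+B_k^2)\bigl(1-2e^{\alpha_k t}\cos\beta_k t\bigr)$ and is not a pure exponential. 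The paper's one-line proof and the statement \eqref{eq:spiral1} suppress this shift (and the proof also carries a stray factor $\frac{1}{|u|^2}$). Your reading --- that \eqref{eq:spiral1} is to be understood in coordinates centred at the pole $(-B_k,-A_k)$ of the spiral --- is the correct way to reconcile the two, and your explicit polar parametrisation makes the ``logarithmic spiral'' claim genuinely precise. Nothing is missing; the bookkeeping step you flag (factoring $\sinh\alpha_k t+\cosh\alpha_k t=e^{\alpha_k t}$) is indeed routine.
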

\begin{proof}
 From \eqref{eq:solcomplex1} we obtain
$$\tilde v_{r+k_1}^2(t)+\tilde v_{r+k_2}^2(t)=\frac{1}{|u|^2}\big(A_k^2+B_k^2\big)e^{2\alpha_kt}.$$
Substituting $A_k$ and $B_k$ into $A_k^2+B_k^2$ we get the desired formula \eqref{eq:spiral1}.
\end{proof}
\begin{figure}[ht]
\centering \scalebox{0.7}{\includegraphics{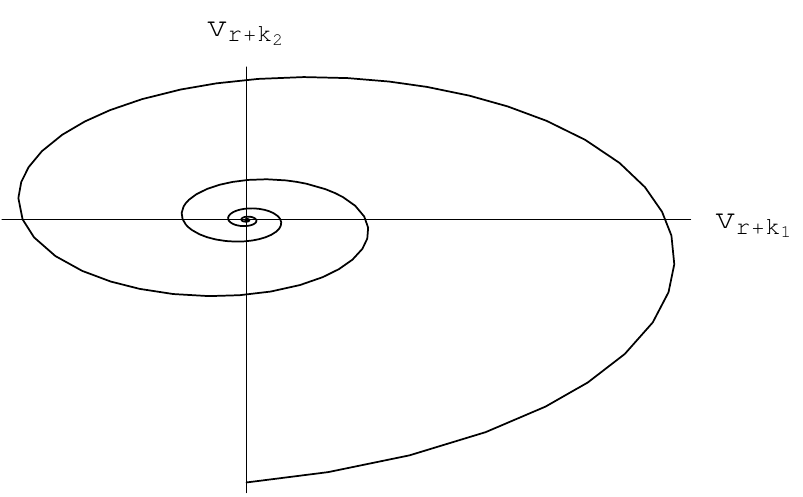}}
\caption[]{Projection of geodesic onto $(\tilde v_{r+k_1},\tilde v_{r+k_2})$-plane}
\end{figure}
Analogously using \eqref{eq:solcomplex2} can be shown the next lemma.
\begin{lemma}
    The projection of the curve $\tilde v(t)$ onto the $(\tilde v_{r+k_3},\tilde v_{r+k_4})$-plane, $k=1,\ldots,\frac{n}{4}-\frac{r}{2}$, is a logarithmic spiral 
    with the equation 
    \begin{equation}\label{eq:spiral2}
             \tilde v_{r+k_3}^2(t)+\tilde v_{r+k_4}^2(t)=\frac{1}{|u|^2}\Big((\dot{\tilde v}^0_{r+k_3})^2+(\dot{\tilde v}^0_{r+k_4})^2\Big)e^{-2\alpha_kt}.
    \end{equation}
\end{lemma}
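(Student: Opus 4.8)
The plan is to follow, essentially line for line, the proof of the preceding lemma, with the explicit solution \eqref{eq:solcomplex2} now playing the role that \eqref{eq:solcomplex1} played there. First I would subtract off the affine parts already exhibited in \eqref{eq:solcomplex2}: setting $x(t):=\tilde v_{r+k_3}(t)-D_k$ and $y(t):=\tilde v_{r+k_4}(t)-C_k$, the constant summands $+D_k$ and $+C_k$ cancel and $x,y$ become pure combinations of $\sinh\alpha_k t$ and $\cosh\alpha_k t$ times $\sin\beta_k t$ and $\cos\beta_k t$.

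Second, I would collect those products so that in each the hyperbolic factors enter only through $\sinh\alpha_k t-\cosh\alpha_k t=-e^{-\alpha_k t}$; this is the sole structural difference from the previous lemma, where the combination $\sinh\alpha_k t+\cosh\alpha_k t=e^{\alpha_k t}$ arose, and it is exactly what produces the exponent $-2\alpha_k t$ in \eqref{eq:spiral2}. The outcome is
$$x(t)=(C_k\sin\beta_k t-D_k\cos\beta_k t)\,e^{-\alpha_k t},\qquad y(t)=-(C_k\cos\beta_k t+D_k\sin\beta_k t)\,e^{-\alpha_k t},$$
so that, squaring and adding, the angular terms collapse by $\sin^2+\cos^2=1$ and one is left with $x(t)^2+y(t)^2=(C_k^2+D_k^2)\,e^{-2\alpha_k t}$.

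Finally I would substitute $C_k=\frac{1}{|u|^2}(\alpha_k\dot{\tilde v}^0_{r+k_4}+\beta_k\dot{\tilde v}^0_{r+k_3})$ and $D_k=\frac{1}{|u|^2}(\alpha_k\dot{\tilde v}^0_{r+k_3}-\beta_k\dot{\tilde v}^0_{r+k_4})$ into $C_k^2+D_k^2$, getting $C_k^2+D_k^2=\frac{\alpha_k^2+\beta_k^2}{|u|^4}\big((\dot{\tilde v}^0_{r+k_3})^2+(\dot{\tilde v}^0_{r+k_4})^2\big)$, and then invoke $\alpha_k^2+\beta_k^2=|u|^2$ (established in Section \ref{Geodesics}) to reduce this to $\frac{1}{|u|^2}\big((\dot{\tilde v}^0_{r+k_3})^2+(\dot{\tilde v}^0_{r+k_4})^2\big)$; together with the centre $(\tilde v_{r+k_3},\tilde v_{r+k_4})=(D_k,C_k)$ this gives \eqref{eq:spiral2} and exhibits the projected curve as a logarithmic spiral about that point. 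I do not expect any genuine obstacle: the argument is purely computational and entirely parallel to the spiral lemma for the $(\tilde v_{r+k_1},\tilde v_{r+k_2})$-plane, the one thing needing care being the sign bookkeeping in the regrouping of the hyperbolic terms.
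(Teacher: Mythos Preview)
Your approach is exactly the one the paper intends: the paper's own proof is the single line ``Analogously using \eqref{eq:solcomplex2} can be shown the next lemma,'' and you have carried out precisely that analogue of the preceding spiral lemma. If anything you are more careful than the paper, since you keep track of the centre shift $(D_k,C_k)$ that the stated equation \eqref{eq:spiral2} (like \eqref{eq:spiral1}) silently suppresses.
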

\begin{figure}[ht]
\centering \scalebox{0.7}{\includegraphics{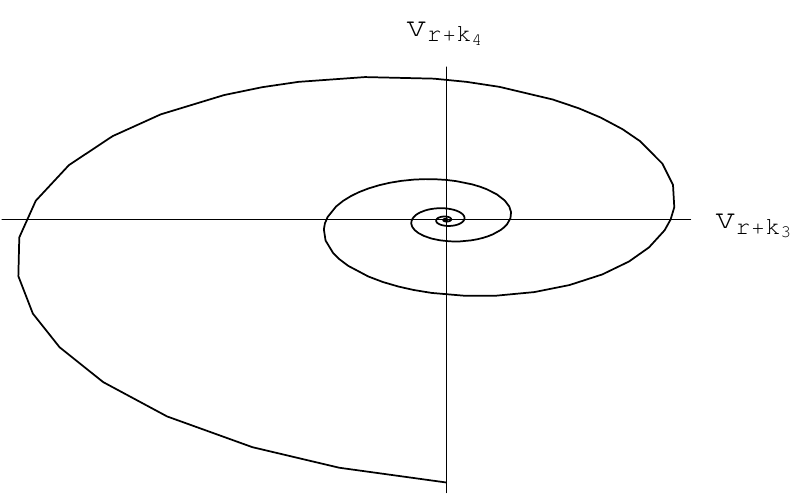}}
\caption[]{Projection of geodesic onto $(\tilde v_{r+k_3},\tilde v_{r+k_4})$-plane}
\end{figure}

\subsection{Solution in a matrix form}

Let us rewrite the matrix $D$ \eqref{eq:real}-\eqref{eq:complex} in the following form $\tilde D=\tilde P D\tilde P^t$, where the 
matrix $\tilde P$ is obtained from the matrix $P$ by rearranging lines in the blocks of type \eqref{eq:complex} and
\begin{equation}\label{eq:tildeD}
    \tilde D=\left(
      \begin{array}{cccc}
           |u|D_1&0&0&0\\
           0&|u|D_2&0&0\\
           0&0&|u|^2D_3&0\\
           0&0&0&|u|^2D_4
      \end{array}
\right),
\end{equation}
where $D_1$ is a $(2s\times 2s)$-matrix  
$\left(\begin{array}{cc} 0&1\\1&0\end{array}\right)$,
$D_2$ is a block-diagonal $\big(2(r-s)\times2(r-s)\big)$-matrix consisting of $(r-s)$ $(2\times2)$-blocks of type 
$\left(\begin{array}{cc} 0&1\\-1&0\end{array}\right)$,
$D_3$ is a block-diagonal $\big((\frac{n}{2}-r)\times(\frac{n}{2}-r)\big)$-matrix consisting of $\frac{n}{4}-\frac{r}{2}$ 
$(2\times2)$-blocks of type $D_{3,k}=\left(\begin{array}{cc} \cos\theta_k&\sin\theta_k\\-\sin\theta_k&\cos\theta_k\end{array}\right)$, 
where $\cos\theta_k=\frac{\alpha_k}{|u|^2}$, $k=1,\ldots,\frac{n}{4}-\frac{r}{2}$, and $D_4=-D_3$. Notice that there is no matrix 
$D_1$ in \eqref{eq:tildeD} if $s=0$, i.~e. if $p$ is even. Analogously, there are no matrices $D_1$ and $D_2$ in \eqref{eq:tildeD} 
if $r=0$, and there are no matrices $D_3$ and $D_4$ if $k=0$.

Then 
\begin{equation}\label{eq:eD}
    e^{\tilde Dt}=\left(
      \begin{array}{cccc}
           e^{|u|D_1t}&0&0&0\\
           0&e^{|u|D_2t}&0&0\\
           0&0&e^{|u|^2D_3t}&0\\
           0&0&0&e^{|u|^2D_4t}
      \end{array}
\right).
\end{equation}
Notice that
\begin{gather*}
 D_1^l=\begin{cases}
        D_1,\; l - \mbox{odd},\\
        I_{2s}, \; l - \mbox{even},
       \end{cases}
 D_2^l=\begin{cases}
        D_2,\quad l=4d+1,\\
        -I_{2(r-s)}, \quad l=4d+2,\\
        -D_2,\quad l=4d+3,\\
        I_{2(r-s)},\quad l=4d,\quad d=0,1,2,\ldots,
       \end{cases}\\
 D_{3,k}^l=\left(\begin{array}{cc}  \cos l\theta_k&\sin l\theta_k\\
                                   -\sin l\theta_k&\cos l\theta_k
                \end{array}\right),\\
D_{4,k}^l=-D^l_{3.k},\quad k=1,\ldots,\frac{n}{4}-\frac{r}{2},
\end{gather*}
where $I_{2s}$ and $I_{2(r-s)}$ are identity matrices of dimensions $(2s\times 2s)$ and $2(r-s)\times 2(r-s)$ respectively.

Taking into account the formulae above we calculate each of the blocks in \eqref{eq:eD}
\begin{gather*}
    e^{|u|D_1t}=I_{\tiny 2s}+D_1|u|t+\frac{D_1^2|u|^2t^2}{2!}+\frac{D_1^3|u|^3t^3}{3!}+\ldots\\
   =\big(I_{2s}+\frac{(|u|t)^2I_{2s}}{2!}+\frac{(|u|t)^4I_{2s}}{4!}+\ldots\big)+
   \big(|u|tD_1+\frac{(|u|t)^3D_1}{3!}+\frac{(|u|t)^5D_1}{5!}+\ldots\big)\\
   =\cosh\big(|u|t\big)I_{2s}+\sinh\big(|u|t\big)D_1,\\
    e^{|u|D_2t}=I_{\tiny 2(r-s)}+D_2|u|t+\frac{D_2^2|u|^2t^2}{2!}+\frac{D_2^3|u|^3t^3}{3!}+\ldots\\
   =\big(I_{2(r-s)}-\frac{(|u|t)^2I_{2(r-s)}}{2!}+\frac{(|u|t)^4I_{2(r-s)}}{4!}-\ldots\big)+
   \big(|u|tD_2-\frac{(|u|t)^3D_2}{3!}+\frac{(|u|t)^5D_2}{5!}-\ldots\big)\\
   =\cos\big(|u|t\big)I_{2(r-s)}+\sin\big(|u|t\big)D_2,
\end{gather*}
\begin{gather*}
e^{|u|^2D_{3,k}t}=\left(\begin{array}{cc} \sum\limits_{l=0}^{\infty}\frac{\cos l\theta_k}{l!}(|u|^2t)^l & 
\sum\limits_{l=0}^{\infty}\frac{\sin l\theta_k}{l!}(|u|^2t)^l\\
-\sum\limits_{l=0}^{\infty}\frac{\sin l\theta_k}{l!}(|u|^2t)^l&
\sum\limits_{l=0}^{\infty}\frac{\cos l\theta_k}{l!}(|u|^2t)^l
\end{array}\right)\\
=\left(\begin{array}{cc}e^{|u|^2t\cos\theta_k}\cos(|u|^2t\sin\theta_k)& e^{|u|^2t\cos\theta_k}\sin(|u|^2t\sin\theta_k)\\
        -e^{|u|^2t\cos\theta_k}\sin(|u|^2t\sin\theta_k)&e^{|u|^2t\cos\theta_k}\cos(|u|^2t\sin\theta_k)
       \end{array}\right)\\
=e^{\alpha_k t}\left(\begin{array}{cc} \cos\beta_k t&\sin \beta_k t\\ -\sin\beta_k t&\cos\beta_k t\end{array}\right)
\end{gather*}
\begin{gather*}
e^{|u|^2D_{4,k}t}
=e^{-\alpha_k t}\left(\begin{array}{cc} \cos\beta_k t&\sin \beta_k t\\ -\sin\beta_k t&\cos\beta_k t\end{array}\right),\quad
k=1,\ldots,\frac{n}{4}-\frac{r}{2}.
\end{gather*}
Therefore, $e^{|u|^2D_3t}$ is a block-diagonal matrix consisting of $\frac{n}{4}-\frac{r}{2}$ $(2\times2)$-blocks of type 
$e^{|u|^2D_{3,k}t}$ and $e^{|u|D_4t}$ is a block-diagonal matrix consisting of $\frac{n}{4}-\frac{r}{2}$ $(2\times2)$-blocks of type 
$e^{|u|^2D_{4,k}t}$.

The solution of the system \begin{equation}\label{eq:sys_tilde}
                            \ddot{\tilde{\tilde v}}=\tilde D\dot{\tilde{\tilde v}},
                           \end{equation}
where $\tilde{\tilde v}=\tilde P\tilde v$, with the initial conditions $\tilde{\tilde v}(0)=0$, $\dot{\tilde{\tilde v}}(0)=
\dot{\tilde{\tilde v}}_0$, is  of the form 
$\tilde{\tilde v}(t)=\big(e^{\tilde Dt}-I_V\big)\tilde D^{-1}
\dot{\tilde{\tilde v}}_0$.
 Notice that $\tilde D^{-1}=\frac{1}
{|u|^2}\tilde D^t$, $\tilde D\tilde D^t=\tilde D^t\tilde D=|u|^2I_V$ and $D_1^t=D_1,D_2^t=-D_2$. 
Therefore, the solution of \eqref{eq:sys_tilde} reduces to 
\begin{equation}\label{eq:solut}
    \tilde{\tilde v}(t)=\frac{1}{|u|^2}\big(e^{\tilde Dt}-I_V\big)\tilde D^t\dot{\tilde{\tilde v}}_0.
\end{equation}

In order to write the solution more explicitely, let us make the following notations: 
let $\tilde{\tilde v}^1(t)$ denote the $2s$-dimensional vector consisting of $2s$ first 
coordinates from the vector $\tilde{\tilde v}(t)$, analogously let 
$\tilde{\tilde v}^2(t)$ denote the vector consisting of next $2(r-s)$ 
coordinates from the vector $\tilde{\tilde v}(t)$, let us denote also by $\tilde{\tilde v}^{3,k}(t)$ the $2$-dimensional part of vector 
$\tilde{\tilde v}(t)$ corresponding to each block $D_{3,k}$, and analogously, $\tilde{\tilde v}^{4,k}(t)$ -- part of 
$\tilde{\tilde v}(t)$ corresponding to block $D_{4,k}$. In analogous way, let $\dot{\tilde{\tilde v}}_0^1$ be the $2s$-dimensional vector of first $2s$ 
coordinates of the vector $\dot{\tilde{\tilde v}}_0$, $\dot{\tilde{\tilde v}}_0^2$ be the vector of next $2(r-s)$ 
coordinates of the vector $\dot{\tilde{\tilde v}}_0$ and 
let us denote by $\dot{\tilde{\tilde v}}^{3,k}_0$ the part of vector 
$\dot{\tilde{\tilde v}}_0$ corresponding to each block $D_{3,k}$, and analogously, $\dot{\tilde{\tilde v}}^{4,k}_0$ -- part of 
$\dot{\tilde{\tilde v}}_0$ corresponding to block $D_{4,k}$, $k=1,\ldots,\frac{n}{4}-\frac{r}{2}$.
 Then the solution \eqref{eq:solut} $\tilde{\tilde v}(t)=\big(\tilde{\tilde v}^1,\tilde{\tilde v}^2,
\tilde{\tilde v}^{3,1},\tilde{\tilde v}^{4,1},\ldots,\tilde{\tilde v}^{3,\frac{n}{4}-\frac{r}{2}},
\tilde{\tilde v}^{4,\frac{n}{4}-\frac{r}{2}}\big)(t)$ to the system \eqref{eq:sys_tilde} 
can be written in the form
\begin{equation}\label{eq:tildetildev}
 \begin{split}
    &\tilde{\tilde v}^1(t)=\frac{\dot{\tilde{\tilde v}}^1_0}{|u|^2}\big(\sinh(|u|t)I_{2s}+(\cosh(|u|t)-1)D_1\big),\\
    &\tilde{\tilde v}^2(t)=\frac{\dot{\tilde{\tilde v}}^2_0}{|u|^2}\big(\sin(|u|t)I_{2(r-s)}+(1-\cos(|u|t))D_2\big),\\
    &\tilde{\tilde v}^{3,k}(t)=\dot{\tilde{\tilde v}}^{3,k}_0e^{2\alpha_kt}\mathbb{B}_k,\\
    &\tilde{\tilde v}^{4,k}(t)=\dot{\tilde{\tilde v}}^{4,k}_0e^{-2\alpha_kt}\mathbb{B}_k,
\end{split}
\end{equation}
where matrix $\mathbb{B}_k=\frac{1}{|u|^2}\left(\begin{array}{cc}
    1-\cos(\beta_kt)&\sin(\beta_kt)\\-\sin(\beta_kt)&1-\cos(\beta_kt)
   \end{array}\right)$, $k=1,\ldots,\frac{n}{4}-\frac{r}{2}$.

For the horizontal part of the geodesic $\gamma(t)$ we make following observations: 
2-dimensional $\tilde{\tilde v}^1(t)$, corresponding to the cell in $\tilde D$ of type 
$D_1$, describes the geodesic which lives in a sub-Lorentzian 3-dimensional Heisenberg group, i.~e. Heisenberg group 
with the metric tensor 
$\left(\begin{array}{cc}-1&0\\0&1 \end{array}\right)$ on the horizontal bundle;
 2-dimensional projections of $\tilde{\tilde v}^2(t)$, corresponding to the cells in $\tilde D$ of type 
$D_2$, describe the geodesics which live in a sub-Riemannian 3-dimensional Heisenberg group with a Riemannian metric 
$\left(\begin{array}{cc}1&0\\0&1 \end{array}\right)$ on the horizontal bundle.
There are no analogues to $\tilde{\tilde v}^{3,k}(t)$ or $\tilde{\tilde v}^{3,k}(t)$ in Heisenberg group, but
together
$(\tilde{\tilde v}^{3,k}(t),\tilde{\tilde v}^{4,k}(t))$ for each $k$ they correspond to the geodesic which lives 
in a sub-semi-Riemannian $H$-type Quaternion group with the metric of index 2 on the horizontal subbundle and corresponds to the 
$(4\times4)$-dimensional block of type  $$\left(\begin{array}{cccc}\cos\theta_k&\sin\theta_k&0&0\\
-\sin\theta_k&\cos\theta_k&0&0\\0&0&-\cos\theta_k&-\sin\theta_k\\0&0&\sin\theta_k&-\cos\theta_k \end{array}\right),$$
(see \cite{KM2}).

Once $\tilde{\tilde v}(t)$ is calculated one can find the horizontal part $v(t)$ of the geodesic $\gamma(t)=(v(t),u(t))$ by 
\begin{equation}\label{eq:vvv}
v=P^{-1}\tilde P^{-1}\tilde{\tilde v}.
\end{equation}
To find a vertical part $u(t)$ we substitute $v=P^{-1}\tilde P^{-1}\tilde{\tilde v}$ into the condition \eqref{eq:uequation} and 
obtain that
$$\dot u+\frac12[P^{-1}\tilde P^{-1}\dot{\tilde{\tilde v}},\tilde{\tilde v}]=\dot u_0,$$
and, integrating, we get 
\begin{equation}\label{eq:u(t)} 
u(t)=-\frac12\int\limits_0^t[P^{-1}\tilde P^{-1}\dot{\tilde{\tilde v}},P^{-1}\tilde P^{-1}\tilde{\tilde v}]\,dt+\dot u_0t.
\end{equation}

\section{Example}\label{Example}

In this section we provide an illustrating example of how the theory above can be applied to concrete situation. 

Let us consider the $H$-type group with algebra $V\oplus U$, $V=\mathbb{R}^8$, $U=\mathbb{R}^7$,
and with nondegenerate metric $\eta$ of arbitrary index $p\leqslant4$ on $V$. 

Let $\gamma(t)=\big(v(t),u(t)\big)$ be a geodesic on such group. It is sufficient for us to find only the part $\tilde{\tilde v}(t)$ 
since $v(t)$ and $u(t)$ can be obtained from it by \eqref{eq:vvv} and \eqref{eq:u(t)}.

To escape extra indexes it would be convinient to denote initial vertical velocity $\dot u^0$  by $u=(u_1,\ldots,u_7)$.
Since $j(u)$ appears from the structure of the algebra and does not depend on the metric on $V$, we will base on the fact that 
matrix $j(u)$ for octonion $H$-type group in sub-Riemannian case is already calculated in \cite{CM}.
\begin{gather*}
    j(u)=\left(
      \begin{array}{cccccccc}
          0&u_1&u_2&u_3&u_4&u_5&u_6&u_7\\
          -u_1&0&u_3&-u_2&u_5&-u_4&-u_7&u_6\\
          -u_2&-u_3&0&u_1&u_6&u_7&-u_4&-u_5\\
          -u_3&u_2&-u_1&0&u_7&-u_6&u_5&-u_4\\
          -u_4&-u_5&-u_6&-u_7&0&u_1&u_2&u_3\\
          -u_5&u_4&-u_7&u_6&-u_1&0&-u_3&u_2\\
          -u_6&u_7&u_4&-u_5&-u_2&u_3&0&-u_1\\
          -u_7&-u_6&u_5&u_4&-u_3&-u_2&u_1&0
      \end{array}
    \right),
\end{gather*} where $u=(u_1,\ldots,u_7)\in\mathbb{R}^7$.
Matrices $j_{\alpha}$ are taken from \cite{CM} and have the following forms:
\begin{gather*}
 j_1=\left(
     \begin{array}{cccccccc}
        0&1&0&0&0&0&0&0\\
        \!\!\!\!{}-\!\!{}1&0&0&0&0&0&0&0\\
        0&0&0&1&0&0&0&0\\
        0&0&\!\!\!\!{}-\!\!{}1&0&0&0&0&0\\
        0&0&0&0&0&1&0&0\\
        0&0&0&0&\!\!\!\!{}-\!\!{}1&0&0&0\\
        0&0&0&0&0&0&0&\!\!\!\!{}-\!\!{}1\\
        0&0&0&0&0&0&1&0
     \end{array}
\right),\qquad
 j_2=\left(
     \begin{array}{cccccccc}
        0&0&1&0&0&0&0&0\\
        0&0&0&\!\!\!\!{}-\!\!{}1&0&0&0&0\\
        \!\!\!\!{}-\!\!{}1&0&0&0&0&0&0&0\\
        0&1&0&0&0&0&0&0\\
        0&0&0&0&0&0&1&0\\
        0&0&0&0&0&0&0&1\\
        0&0&0&0&\!\!\!\!{}-\!\!{}1&0&0&0\\
        0&0&0&0&0&\!\!\!\!{}-\!\!{}1&0&0
     \end{array}
\right),
\end{gather*}
\begin{gather*}
  j_3=\left(
     \begin{array}{cccccccc}
        0&0&0&1&0&0&0&0\\
        0&0&1&0&0&0&0&0\\
        0&\!\!\!\!{}-\!\!{}1&0&0&0&0&0&0\\
        \!\!\!\!{}-\!\!{}1&0&0&0&0&0&0&0\\
        0&0&0&0&0&0&0&1\\
        0&0&0&0&0&0&\!\!\!\!{}-\!\!{}1&0\\
        0&0&0&0&0&0&1&0\\
        0&0&0&0&\!\!\!\!{}-\!\!{}1&0&0&0
     \end{array}
\right),\qquad
j_4=\left(
     \begin{array}{cccccccc}
        0&0&0&0&1&0&0&0\\
        0&0&0&0&0&\!\!\!\!{}-\!\!{}1&0&0\\
        0&0&0&0&0&0&\!\!\!\!{}-\!\!{}1&0\\
        0&0&0&0&0&0&0&\!\!\!\!{}-\!\!{}1\\
        \!\!\!\!{}-\!\!{}1&0&0&0&0&0&0&0\\
        0&1&0&0&0&0&0&0\\
        0&0&1&0&0&0&0&0\\
        0&0&0&1&0&0&0&0
     \end{array}
\right),
\end{gather*}
\begin{gather*}
 j_5=\left(
     \begin{array}{cccccccc}
        0&0&0&0&0&1&0&0\\
        0&0&0&0&1&0&0&0\\
        0&0&0&0&0&0&0&\!\!\!\!{}-\!\!{}1\\
        0&0&0&0&0&0&1&0\\
        0&\!\!\!\!{}-\!\!{}1&0&0&0&0&0&0\\
        \!\!\!\!{}-\!\!{}1&0&0&0&0&0&0&0\\
        0&0&0&\!\!\!\!{}-\!\!{}1&0&0&0&0\\
        0&0&1&0&0&0&0&0
     \end{array}
\right),\qquad
j_6=\left(
     \begin{array}{cccccccc}
        0&0&0&0&0&0&1&0\\
        0&0&0&0&0&0&0&1\\
        0&0&0&0&1&0&0&0\\
        0&0&0&0&0&\!\!\!\!{}-\!\!{}1&0&0\\
        0&0&\!\!\!\!{}-\!\!{}1&0&0&0&0&0\\
        0&0&0&1&0&0&0&0\\
        \!\!\!\!{}-\!\!{}1&0&0&0&0&0&0&0\\
        0&\!\!\!\!{}-\!\!{}1&0&0&0&0&0&0
     \end{array}
\right),
\end{gather*}
\begin{gather*}
 j_7=\left(
     \begin{array}{cccccccc}
        0&0&0&0&0&0&0&1\\
        0&0&0&0&0&0&\!\!\!\!{}-\!\!{}1&0\\
        0&0&0&0&0&1&0&0\\
        0&0&0&0&1&0&0&0\\
        0&0&0&\!\!\!\!{}-\!\!{}1&0&0&0&0\\
        0&0&\!\!\!\!{}-\!\!{}1&0&0&0&0&0\\
        0&1&0&0&0&0&0&0\\
        \!\!\!\!{}-\!\!{}1&0&0&0&0&0&0&0
     \end{array}
\right)
\end{gather*}

Let $j_1,\ldots,j_7$ be endomorphisms from subsection \ref{matrices} satisfying \eqref{eq:proper} and such that 
$$j(u)=u_1j_1+\ldots+u_7j_7.$$ 
Denote as before the matrix $\eta j(u)$ by symbol $A$. Denote also 
$|u|^2=u_1^2+u_2^2+u_3^3+u_4^2+u_5^2+u_6^2+u_7^2$.
 
Let us find the characteristic polynomials and eigenvalues of $A$ for all $p=1,\ldots,4$.

If $p=1$, then
\begin{gather*}
    p^1_A(\lambda)=\lambda^8+2|u|^2\lambda^6-2|u|^6\lambda^2-|u|^8,
\end{gather*}
and eigenvalues of $A$ are $\pm |u|,\,\pm i|u|,\,\pm i|u|,\,\pm i|u|$. 
Notice that all matrices $\eta j_1,\ldots,\eta j_7$ have the same eigenvalues $\pm1,\pm i,\pm i,\pm i$.

We see that if $p=1$, then $s=1$, $r=4$ and $k=0$ in \eqref{eq:real}--\eqref{eq:complex} and, therefore, 
$\tilde{\tilde v}(t)=\big(\tilde{\tilde v}^1(t),\tilde{\tilde v}^2(t)\big)$, $
\dot{\tilde{\tilde v}}(0)=\big(\dot{\tilde{\tilde v}}^1_0,\dot{\tilde{\tilde v}}^2_0\big)$.
By \eqref{eq:tildetildev} we get
\begin{gather*}
    \tilde{\tilde v}^1(t)=\frac{\dot{\tilde{\tilde v}}^1_0}{|u|^2}\big(\sinh|u|tI_2+(\cosh|u|t-1)D_1\big),\\
    \tilde{\tilde v}^2(t)=\frac{\dot{\tilde{\tilde v}}^2_0}{|u|^2}\big(\sin|u|tI_6+(1-\cos|u|t)D_2\big),\\
\end{gather*}
where $D_1,D_2$ are $(6\times6)$-matrix described in \eqref{eq:tildeD}, $I_2,I_6$ -- identity matrices.

If $p=2$, then
\begin{gather*}
    p^2_A(\lambda)=\lambda^8+4u_1^2\lambda^6+2|u|^2\big(a^2+2(u_1^2-u_2^2-\ldots-u_7^2)\big)\lambda^4+4|u|^4u_1^2\lambda^2+|u|^8,
\end{gather*}
and eigenvalues are $\pm(iu_1\pm\sqrt{u_2^2+\ldots+u_7^2}),\,\pm i|u|,\,\pm i|u|$. 
Notice that matrix $\eta j_1$ has eigenvalues $\pm i,\pm i, \pm i,\pm i$ and matrices
$\eta j_2,\ldots \eta j_7$ have eigenvalues $\pm 1,\pm 1,\pm i,\pm i$.

We see that if $p=2$, then $s=0$, $r=2$ and $k=1$, $\alpha_1=\sqrt{u_2^2+\ldots+u_7^2}$, $\beta_1=u_1$ in \eqref{eq:real}--\eqref{eq:complex} and, therefore, 
$\tilde{\tilde v}(t)=\big(\tilde{\tilde v}^2(t),\tilde{\tilde v}^{3,1}(t),\tilde{\tilde v}^{4,1}(t)\big)$, $
\dot{\tilde{\tilde v}}(0)=\big(\dot{\tilde{\tilde v}}^2_0,\dot{\tilde{\tilde v}}^{3,1}_0,\dot{\tilde{\tilde v}}^{4,1}_0\big)$.
By \eqref{eq:tildetildev} we get
\begin{gather*}
    \tilde{\tilde v}^2(t)=\frac{\dot{\tilde{\tilde v}}^1_0}{|u|^2}\big(\sin|u|tI_4+(1-\cos|u|t)D_2\big),\\
    \tilde{\tilde v}^{3,1}(t)=\dot{\tilde{\tilde v}}^{3,1}_0e^{2\alpha_1t}\mathbb{B}_1,\\
    \tilde{\tilde v}^{4,1}(t)=\dot{\tilde{\tilde v}}^{4,1}_0e^{-2\alpha_1t}\mathbb{B}_1,\\
\end{gather*}
where $D_2$ are $(4\times4)$-matrix described in \eqref{eq:tildeD}, $I_4$ is $(4\times4)$-identity matrix and 
\begin{equation}\label{eq:fatB}
  \mathbb{B}_1=\frac{1}{|u|^2}\left(\begin{array}{cc} 1-\cos\beta_1t&\sin\beta_1t\\-\sin\beta_1t&1-\cos\beta_1t\end{array}
\right).                                                                                                            
                                                                                                              \end{equation}

If $p=3$, then
\begin{gather*}
    p^3_A(\lambda)=\lambda^8+2(u_1^2+u_2^2+u_3^2-u_4^2-\ldots-u_7^2)\lambda^6-2|u|^4(u_1^2+u_2^2+u_3^2-u_4^2
   -\ldots-u_7^2)\lambda^2-|u|^8,
\end{gather*}
and eigenvalues are $\pm |u|,\,\pm i|u|,\,\pm(i\sqrt{u_1^2+u_2^2+u_3^2}\pm\sqrt{u_4^2+\ldots+u_7^2})$. 
Notice that $\eta j_1,\eta j_2,\eta j_3$ have eigenvalues $\pm 1,\pm i,\pm i,\pm i$, and matrices $\eta j_4,\ldots,\eta j_7$ 
have eigenvalues $\pm i,\pm 1,\pm 1,\pm 1$.

We see that if $p=3$, then $s=1$, $r=2$ and $k=1$, $\alpha_1=\sqrt{u_4^2+\ldots+u_7^2}$, $\beta_1=\sqrt{u_1^2+u_2^2+u_3^2}$, and 
$\tilde{\tilde v}(t)=\big(\tilde{\tilde v}^2(t),\tilde{\tilde v}^2(t),\tilde{\tilde v}^{3,1}(t),\tilde{\tilde v}^{4,1}(t)\big)$,
$\dot{\tilde{\tilde v}}(0)=\big(\dot{\tilde{\tilde v}}^1_0,\dot{\tilde{\tilde v}}^2_0,
\dot{\tilde{\tilde v}}^{3,1}_0,\dot{\tilde{\tilde v}}^{4,1}_0\big)$. Then, by \eqref{eq:tildetildev}
\begin{gather*}
    \tilde{\tilde v}^1(t)=\frac{\dot{\tilde{\tilde v}}^1_0}{|u|^2}\big(\sinh|u|tI_2+(\cosh|u|t-1)D_1\big),\\
    \tilde{\tilde v}^2(t)=\frac{\dot{\tilde{\tilde v}}^2_0}{|u|^2}\big(\sin|u|tI_2+(1-\cos|u|t)D_2\big),\\
    \tilde{\tilde v}^{3,1}(t)=\dot{\tilde{\tilde v}}^{3,1}_0e^{2\alpha_1t}\mathbb{B}_1,\\
    \tilde{\tilde v}^{4,1}(t)=\dot{\tilde{\tilde v}}^{4,1}_0e^{-2\alpha_1t}\mathbb{B}_1,\\
\end{gather*}
where $\mathbb{B}_1$ has the same form as in \eqref{eq:fatB} with 
$\beta_1=\sqrt{u_1^2+u_2^2+u_3^2}$, $D_1$ and $D_2$ are $(2\times2)$-matrices 
represented in \eqref{eq:tildeD}.

If $p=4$, then
\begin{gather*}
    p^4_A(\lambda)=\lambda^8+4(u_1^2+u_2^2+u_3^2-u_4^2-\ldots-u_7^2)\lambda^6\\+2\big(
    |u|^4+2(u_1^2+u_2^2+u_3^2-u_4^2-\ldots-u_7^2)^2\big)\lambda^4+4|u|^4(u_1^2+u_2^2+u_3^2-u_4^2-\ldots-u_7^2)\lambda^2+|u|^8,
\end{gather*}
and eigenvalues are $\pm(i\sqrt{u_1^2+u_2^2+u_3^2}\pm\sqrt{u_4^2+\ldots+u_7^2})$, 
$\pm(i\sqrt{u_1^2+u_2^2+u_3^2}\pm\sqrt{u_4^2+\ldots+u_7^2})$.
Notice that matrices $\eta j_1,\eta j_2,\eta j_3$ have only purely imaginary eigenvalues $\pm i,\pm i,\pm i,\pm i$, 
and matrices $\eta j_4,\ldots,\eta j_7$ 
have only real eigenvalues $\pm 1,\pm 1,\pm 1,\pm 1$.

We see that is $p=4$, then $s=0$, $r=0$, $k=2$ and $\alpha_1=\alpha_2=\sqrt{u_4^2+\ldots+u_7^2}$, $\beta_1=\beta_2=\sqrt{u_1^2+u_2^2+u_3^2}$. Then 
$\tilde{\tilde v}(t)=\big(\tilde{\tilde v}^{3,1}(t),\tilde{\tilde v}^{4,1}(t),
\tilde{\tilde v}^{3,2}(t),\tilde{\tilde v}^{4,2}(t)\big)$ and initial velocity is 
$\dot{\tilde{\tilde v}}(0)=\big(\dot{\tilde{\tilde v}}^{3,1}_0,\dot{\tilde{\tilde v}}^{4,1}_0,
\dot{\tilde{\tilde v}}^{3,2}_0,\dot{\tilde{\tilde v}}^{4,2}_0\big)$. By formulae \eqref{eq:tildetildev} we have 
\begin{gather*}
     \tilde{\tilde v}^{3,1}(t)=\dot{\tilde{\tilde v}}^{3,1}_0e^{2\alpha_1t}\mathbb{B}_1,\\
     \tilde{\tilde v}^{4,1}(t)=\dot{\tilde{\tilde v}}^{4,1}_0e^{-2\alpha_1t}\mathbb{B}_1,\\
     \tilde{\tilde v}^{3,2}(t)=\dot{\tilde{\tilde v}}^{3,2}_0e^{2\alpha_1t}\mathbb{B}_1,\\
    \tilde{\tilde v}^{4,2}(t)=\dot{\tilde{\tilde v}}^{4,2}_0e^{-2\alpha_1t}\mathbb{B}_1,\\
\end{gather*}
In fact, the author's guess is that for all sub-semi-Riemannian $H$-type groups satisfying $j^2$-condition all complex eigenvalues 
$\alpha_k+i\beta_k$ are the same $\forall k$ for every fixed index $p$.
Notice that in case $p$ is even there are no real eigenvalues, i.~e. $s=0$ in \eqref{eq:real}, and in case $p$ is odd $\pm |u|$ are the only couple of 
real eigenvalues, i.~e. $s=1$.\\

\section{Possible open problems}

As a continuation of work in this direction it would be interesting to consider the generalization of matrices $A$ to matrices of 
order $n=\rho^{a}k$, where $\rho$ is a prime number and $k$ is prime to $\rho$, and to consider the corresponding analogue of $H$-type groups. 
In this case each matrix $j_{\alpha}$  would represent a $p$-th root of $-1$ (see \cite{Little}). The number of such matrices 
in a set of $n$-rowed matrices that satisfy the conditions
$$j^p_{\alpha}=-I,\qquad j_{\alpha}j_{\beta}=-j_{\beta}j_{\alpha}, \quad(\alpha\neq\beta),$$ 
is not more than $2a+1$.

Another interesting research line could be considering vector spaces $U$ and $V$ both with nondegenerate metrics $\langle
\cdot,\cdot\rangle_U$ and $\langle\cdot,\cdot\rangle_V$. The description of corresponding $H$-type groups would be probably simpler 
than in the present article, since the construction of representations of Clifford algebras $\Cl(U)$ are less tricky in the case of 
nondegenerate indefinite product on $U$.
\\

{\bf Acknowledgments}.\\

I wish to thank my advisor Irina Markina and my colleague Mauricio Godoy Molina for useful suggestions and fruitful discussions
 during the work.

\end{document}